\newtheorem{theorem}{Theorem}[section]
\newtheorem{lemma}[theorem]{Lemma}
\newtheorem{proposition}[theorem]{Proposition}
\newtheorem{remark}[theorem]{Remark}
\theoremstyle{definition}
\newtheorem{definition}[theorem]{Definition}
\newtheorem{example}[theorem]{Example}
\newcommand {\Aut}{\mathrm{Aut}}
\newcommand {\Ann}{\mathrm{Ann}}
\newcommand {\ord}{\mathrm{ord}}
\newcommand {\gr}{\mathrm{gr}}
\newcommand {\spec}{\mathrm{spec}}
\newcommand {\rk}{\mathrm{rk}}
\newcommand {\sdeg}{\mathrm{sdeg}}
\newcommand {\cdeg}{\mathrm{cdeg}}
\newcommand {\Soc}{\mathrm{Soc}}
\newcommand {\init}{\mathrm{in}}
\newcommand {\ldf}{\mathrm{ldf}}
\newcommand {\tdf}{\mathrm{tdf}}
\newcommand {\Hilb}{\mathcal{H}\kern -0.25ex{\mathit ilb\/}}
\newcommand {\Obs}{\mathcal{O}\kern -0.25ex{\mathit bs\/}}
\newcommand {\Unobs}{\mathcal{U}\kern -0.25ex{\mathit nobs\/}}
\newcommand {\fM}{\mathfrak{M}}
\newcommand {\cU}{\mathcal{U}}
\newcommand {\bN}{\mathbb{N}}
\newcommand {\bP}{\mathbb{P}}
\newcommand {\bA}{\mathbb{A}}
\def\p#1{{\bP^{#1}_{k}}}
\def\a#1{{{\bA}^{#1}_{k}}}
\def\ga#1{{{\accent"12 #1}}}
\def\theroman{\roman{enumi}}
\def\rostere{\begin{list}{\hskip-2truecm(\theroman)}{\usecounter{enumi}}}
\def\endrostere{\end{list}}
\title[$2$--stretched Gorenstein algebras]{A structure theorem\\
 for $2$--stretched Gorenstein algebras}
\keywords{Local, Artinian, Gorenstein algebras, Macaulay's correspondence, punctual Hilbert scheme}
\author[Gianfranco Casnati, Roberto Notari]{Gianfranco Casnati, Roberto Notar}
\thanks{Both the authors are members of GNSAGA group of INdAM. They have been supported by the framework of PRIN 2010-2011 \lq Geometria sulle variet\ga a algebriche\rq, cofinanced by MIUR}
\begin{document}

\begin{abstract}

In this paper we study the isomorphism classes of local, Artinian, Gorenstein $k$--algebras $A$ whose maximal ideal $\frak M$ satisfies $\dim_k(\fM^3/\fM^4)=1$ by means of Macaulay's inverse system generalizing a recent result by J. Elias and M.E. Rossi. Then we use such results in order to complete the description of the singular locus of the Gorenstein locus of $\Hilb_{11}(\p n)$.
\end{abstract}

\subjclass[2010]{Primary 13H10; Secondary 13H15, 14C05.}

\maketitle

\section{Introduction and notation}\label{sIntrNot}

Throughout this paper, a $k$--algebra is an associative, commutative  and unitary algebra over an algebraically closed field $k$ of characteristic $0$.

The study of Artinian $k$--algebras is a classical topic in commutative algebra. It is well--known that each Artinian $k$--algebra is a direct sum of local ones, hence one can restrict its attention to the local case.

Two important invariants of each local, Artinian $k$-algebra $A$ are its
dimension $d:=\dim_k(A)$ as $k$--vector space and the Hilbert function $H_A$ of $A$, i.e. the Hilbert function of the associated graded ring
$$
\gr(A):=\bigoplus_{i=0}^\infty\fM^i/\fM^{i+1},
$$
$\fM$ being the maximal ideal of $A$.

When $d\le 6$ some authors classified such algebras $A$ in terms of $H_A$ (e.g., see \cite{Ma1}, \cite{Ma2}, \cite{Po1}). As $d$
increases the picture becomes much more complicated and not easy
to handle with the same methods (see
\cite{C--E--V--V},\cite{Po2} and the references therein), unless
we introduce some extra technical hypothesis. E.g., if we restrict to Gorenstein algebras, then a complete classification in terms of $H_A$ is
available up to $d\le9$ (see \cite{Cs}). However, once more, as $d$ increases, it is not possible to achieve the complete classification of such algebras. For example, it is not possible to classify in the above sense algebras $A$ with $H_A=(1,4,4,1)$. At the same time, researchers have focused on some interesting classes of local, Artinian, Gorenstein $k$--algebras (see \cite{Sa1}, \cite{Ia1}, \cite{E--V1}, \cite{E--V2}, \cite{E--R1}, \cite{E--R2}).

Nevertheless it is possible to prove a general structure result making use of Macaulay's correspondence. Each local, Artinian, Gorenstein $k$--algebra $A$
can be represented as a quotient of the form
$k[[x_1,\dots,x_n]]/J$ for a suitable ideal
$J\subseteq(x_1,\dots,x_n)^2$. If we look at
$k[[x_1,\dots,x_n]]$ as acting on $k[y_1,\dots,y_n]$ via
derivation (i.e. we identify $x_i$ with $\partial/\partial y_i$,
$i=1,\dots,n$), then $J=\Ann(F)$ for
a suitable  $F\in k[y_1,\dots,y_n]$ whose
degree $s$ is exactly the maximum integer $s$ such that ${\frak
M}^s\ne0$, the so called socle degree of $A$, $\sdeg(A)$.

The main result of \cite{E--R1} is that such an $F$ can be chosen homogeneous when the algebras $A$ satisfy $H_A=(1,n,n,1)$. Hence the classification of such algebras  is actually strictly related to the classification up to projectivities of cubic hypersurfaces in $\p{n-1}$.

In the present paper we extend such a result to Artinian, Gorenstein algebras $A$ with $H_A=(1,n,m,1,\dots)$ in Section \ref{sTail} proving the following theorem (see Theorem \ref{tStruct}).

\medbreak
\noindent
{\bf Theorem A.}
{\it Let $A$ be a local, Artinian, Gorenstein algebra. Then $n=H_A(1)$, $m=H_A(2)$, $1=H_A(3)=\dots=H_A(s)$, $s=\sdeg(A)$ if and only if
$$
A\cong k[[x_1,\dots,x_n]]/\Ann(F)
$$
where $F:=y_1^s+F_3+\sum_{j=m+1}^ny_j^2$,  $F_3\in k[y_1,\dots,y_m]$ is a cubic form, $x_1^2\circ F_3=0$ and $x_2\circ F_{3},\dots, x_{m}\circ F_{3}$ are linearly independent.}
\medbreak

Besides the intrinsic interest of the description of local, Artinian algebras of dimension $d$, their study is also important for the characterization of the singular locus of the Hilbert scheme $\Hilb_d(\p n)$. In some recent papers (see \cite{C--N1}, for the case $d\le9$, and \cite{C--N2}, for the case $d=10$) we dealed with such a problem, restricting our attention to the Gorenstein locus $\Hilb_d^G(\p n)\subseteq \Hilb_d(\p n)$, i.e. the locus of schemes $X\cong\spec(A)$ where $A$ is a Gorenstein algebra. In particular, in \cite{C--N2} the structure theorem of \cite{E--R1} has been an helpful tool for such a description.

In \cite{C--N3} a similar analysis has been carried out in the case $d=11$. In that paper we were also able to deal with the singular nature of all $X\in\Hilb_{11}^G(\p n)$, but those ones isomorphic to $\spec(A)$ where $A$ is a local, Artinian, Gorenstein algebra with $H_A=(1,4,4,1,1)$. The second main result of the present paper is the complete description of such kind of $X$ from such a viewpoint: such a description rests on Theorem A above. The main results of Section \ref{sObstructed} can be summarized in the following Theorem.

\medbreak
\noindent
{\bf Theorem B.}
{\it Let $A$ be a local, Artinian, Gorenstein algebra with $H_A=(1,4,4,1,1)$. Then $X:=\spec(A)\in \Hilb_{11}^G(\p n)$ is obstructed if, and only if, $A\cong S[4]/\Ann(F)$ where $F$ is in the following list:
\begin{enumerate}
\item $y_1^4+y_1(b_0 y_2^2 +  b_2 y_3^2 + b_5 y_4^2)+y_2^3 + y_3^3 + y_4^3$, $b_0,b_2,b_5\in k$;
\item $y_1^4+y_1(b_0 y_2^2 +  b_2 y_3^2 + 2b_4 y_3 y_4 )+y_2^3 + y_3^2y_4$, $b_0,b_2,b_4\in k$;
\item $y_1^4+y_1(b_0 y_2^2 + 2 b_1 y_2 y_3 + b_2 y_3^2 + 2b_0y_3 y_4)+y_2^2y_3 + y_3^2y_4$, $b_0,b_2,b_3\in k$;
\item $y_1^4+y_1(b_0 y_2^2 + 2 b_1 y_2 y_3 + b_2 y_3^2 + 2 b_3 y_2 y_4 + 2
b_4 y_3 y_4 + b_5 y_4^2)+y_3^2y_4-y_3^2y_4$, $b_0,\dots,b_5\in k$, $-b_1^2+b_0b_2-b_1b_3-b_3^2+b_0b_4+b_0b_5=0, (b_0,b_1,b_3)\not=(0,0,0)$;
\item $y_1^4+y_1(b_0 y_2^2 + 2 b_1 y_2 y_3 + b_2 y_3^2 + 2 b_3 y_2 y_4 + 2
b_4 y_3 y_4 + b_5 y_4^2)+y_3^2y_4$, $b_0,\dots,b_5\in k$, $b_1^2-b_0b_2=0, (b_0,b_1,b_3)\not=(0,0,0)$;
\item $y_1^4+y_1(b_0 y_2^2 + 2 b_1 y_2 y_3 + b_2 y_3^2 + 2 b_3 y_2 y_4 + 2
b_4 y_3 y_4 + b_5 y_4^2)+y_3^3$, $b_0,\dots,b_5\in k$ and $\rk(M_b)\geq2$ where
$$
M_b:=\left(\begin{array}{ccc}
b_0 & b_1 & b_3 \\
b_1 & b_2 & b_4 \\
b_3 & b_4 & b_5
\end{array}
\right).
$$
\end{enumerate}}
\medbreak

The authors want to express their thanks to Joachim Jelisiejew and Pedro Macias Marques for some instructing conversation and helpful remarks.

\subsection{Notation}
In what follows $k$ is an algebraically closed field of
characteristic $0$. A $k$--algebra is  an associative, commutative
and unitary algebra over $k$. For each $N\in\bN$ we set
$S[N]:=k[[x_1,\dots,x_N]]$ and $P[N]:=k[y_1,\dots,y_N]$. We denote
by $S[N]_q$ (resp. $P[N]_q$) the homogeneous component of degree
$q$ of such a graded $k$--algebra. Let $S[N]_{\le
q}:=\bigoplus_{i=1}^qS[N]_i$ (resp. $P[n]_{\le
q}:=\bigoplus_{i=1}^qP[n]_i$). Finally we set
$S[n]_+:=(x_1,\dots,x_n)\subseteq S[n]$, the unique maximal ideal
of $S[N]$.

A local ring $R$ is Gorenstein if its injective
dimension as $R$--module is finite. An arbitrary ring $R$ is called Gorenstein if $R_{\frak M}$ is Gorenstein for every maximal ideal ${\frak M}\subseteq R$. A scheme $X$ is Gorenstein if and only if for each point $x\in X$ the ring ${\mathcal O}_{X,x}$ is Gorenstein.

For each numerical polynomial $p(t)\in{\Bbb Q}[t]$, we denote by
$\Hilb_{p(t)}(\p N)$ the Hilbert scheme of closed subschemes of
$\p N$ with Hilbert polynomial $p(t)$. With abuse of notation we
will denote by the same symbol both a point in $\Hilb_{p(t)}(\p
N)$ and the corresponding subscheme of $\p N$. We denote by $\Hilb_{p(t)}^G(\p N)$ the locus of points
representing Gorenstein schemes.

If $\gamma:=(\gamma_1,\dots,\gamma_N)\in{\Bbb N}^{N}$ is a
multi--index, then we set
$t^\gamma:=t_1^{\gamma_1}\dots t_N^{\gamma_N}\in k[t_1,\dots,t_N]$

For all the other notations and results we refer to \cite{Ha}.

\section{Some facts on Macaulay's correspondence}\label{sMacCor}
Let $A$ be a local, Artinian $k$--algebra with maximal ideal $\fM$. We know that
$$
A\cong  S[n] /J
$$
for a suitable ideal $J\subseteq S[n]_+^2\subseteq  S[n]$, where $n:=H_A(1)$. Recall that the {\sl socle degree} $\sdeg(A)$ of $A$ is the greatest integer $s$ such that $\fM^s\ne0$.

We have an action of $S[n]$ over
$P[n]$ given by partial derivation defined by identifying
$x_i$ with $ \partial/ \partial {y_i}$.  Hence
$$
x^{\alpha}\circ y^{\beta}:=\left\lbrace\begin{array}{ll}
\alpha!{\beta\choose\alpha}y^{\beta-\alpha}\qquad&\text{if $\beta\ge\alpha$,}\\
0\qquad&\text{if $\beta\not\ge\alpha$.}
 \end{array}\right.
$$
Such an action endows  $ P[n] $ with a structure of module over $ S[n] $. If $J\subseteq  S[n] $ is an ideal and  $M\subseteq  P[n] $ is a $ S[n] $--submodule we set
\begin{gather*}
J^\perp:=\{\ F\in  P[n] \ \vert\ g\circ F=0,\ \forall g\in J\ \},\\
\Ann(M):=\{\ g\in  S[n] \ \vert\ g\circ F=0,\ \forall F\in M\ \}.
\end{gather*}

For the following results see e.g. \cite{Em}, \cite{Ia2} and the
references therein. Macaulay's theory of inverse system is based
on the fact that such constructions $J\mapsto J^\perp$ and
$M\mapsto \Ann(M)$ give rise to a reversing inclusion bijection
between ideals $J\subseteq S[n]$ such that $S[n] /J$ is a local,
Artinian $k$--algebra and finitely generated $S[n]$--submodules
$M\subseteq P[n]$. In this bijection Gorenstein algebras $A$ with
$\sdeg(A)=s$ correspond to cyclic $ S[n] $--submodules $\langle
F\rangle_{S[n]}\subseteq P[n] $ generated by a polynomial $F$ of
degree $s$. We simply write $\Ann(F)$ instead of $\Ann(\langle
F\rangle_{S[n]})$.

On the one hand, given a $S[n]$--module $M$, we define
$$
\tdf(M)_q:=\frac{{M}\cap P[n]_{\le q}+P[n]_{\le q-1}}{P[n]_{\le q-1}}
$$
where $P[n]_{\le q}:=\bigoplus_{i=0}^qP[n]_i$, and
$\tdf(M):=\bigoplus_{q=0}^\infty\tdf(M)_q$. $\tdf(M)$ can be
interpreted as the $S[n]$--submodule of $P[n]$ generated by the
top degree forms of all the polynomials in $M$.

On the other hand, for each  $f\in S[n]$, the lower degree of monomials appearing with non--zero
coefficient in the minimal representation of $f$ is called {\sl
the order of $f$}\/ and it is denoted by $\ord(f)$. If
$f=\sum_{i=\ord(f)}^{\infty}f_i$, $f_i \in S[n]_i$ then
$f_{\ord(f)}$ is called {\sl the lower degree form of $f$}\/. It
will be denoted in what follows with $\ldf(f)$.

If $f\in J$, then $\ord(f)\ge2$. The {\sl lower degree form ideal} $\ldf(J)$ associated to $J$ is
$$
\ldf(J):=(\ldf(f)\vert f\in J)\subseteq  S[n] .
$$

We have $\ldf(\Ann(M))=\Ann(\tdf(M))$ (see \cite{Em}: see also
\cite{E--R1}, Formulas (2) and (3)) whence
$$
\gr(S[n]/\Ann(M))\cong S[n]/\ldf(\Ann(M))\cong S[n]/\Ann(\tdf(M)).
$$
Thus
\begin{equation}
\label{DerMod}
H_{S[n]/\Ann(M)}(q)=\dim_k(\tdf(M)_q).
\end{equation}
We say that $M$ is {\sl non--degenerate}\/ if
$H_{S[n]/\Ann(M)}(1)=\dim_k(\tdf(M)_1)=n$, i.e. if and only if
the classes of $y_1,\dots,y_n$ are in $\tdf(M)$. If $M=\langle F
\rangle_{S[n]}$, then we write $\tdf(F)$ instead of $\tdf(M)$.

Let $A$ be Gorenstein with $s:=\sdeg(A)$, so that
$\Soc(A)=\fM^s\cong k$. In particular $A\cong  S[n] /\Ann(F)$,
where $F:=\sum_{i=0}^sF_i$, $F_i\in P[n]_i$.
For each $h\ge0$ we set $F_{\ge h}:=\sum_{i=h}^sF_i$ (hence $F_s=F_{\ge s}$). 

Trivially, we can
always assume that $F_0=0$. It is easy to check that,
for given $J=\Ann(F)$, it also holds $ J=\Ann(F + \sigma \circ F)
$ for every $ \sigma \in S[n].$ Hence, it makes sense to look for
an easier polynomial $ G $ such that $\Ann(G) = \Ann(F)$.

{\begin{lemma}
Let $F,\widehat{F}\in P[n]$ be such that $F-\widehat{F}\in P[n]_{\le1}$. If $ \Ann(F) \subseteq
S[n]_+^2,$ then $\Ann(F) = \Ann(\widehat{F})$.
\end{lemma}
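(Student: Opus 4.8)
The plan is to deduce the statement from the remark recalled just above the lemma, namely that $\Ann(F)=\Ann(F+\sigma\circ F)$ for every $\sigma\in S[n]_+$. Thus it suffices to exhibit a $\sigma\in S[n]_+$ with $\sigma\circ F=\widehat F-F$: then $\widehat F=F+\sigma\circ F$ and the remark gives $\Ann(\widehat F)=\Ann(F)$. Since $\widehat F-F\in P[n]_{\le1}$, the whole matter reduces to showing that $P[n]_{\le1}$ is contained in the cyclic module $M:=\langle F\rangle_{S[n]}=\{\sigma\circ F:\sigma\in S[n]\}$, and then arranging that the operator realizing $\widehat F-F$ has no constant term.

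First I would extract non--degeneracy of $M$ from the hypothesis. Writing $A:=S[n]/\Ann(F)$ with maximal ideal $\fM$, the containment $\Ann(F)\subseteq S[n]_+^2$ gives $\fM/\fM^2\cong S[n]_+/(S[n]_+^2+\Ann(F))=S[n]_+/S[n]_+^2$, so $H_A(1)=n$; by \eqref{DerMod} this is $\dim_k\tdf(F)_1=n$. Unwinding the definition of $\tdf(F)_1$, this says precisely that for every linear form $\ell\in P[n]_1$ there is an element of $M$ of the shape $\ell+c$ with $c\in k$. On the other hand $M$ contains a nonzero constant: if $y^\gamma$ is a monomial of degree $s:=\deg F$ occurring in the top form $F_s$ with coefficient $a\neq0$, then every term of $F$ of degree $<s$ is killed by the order--$s$ operator $x^\gamma$, so $x^\gamma\circ F=x^\gamma\circ F_s=a\,\gamma!\neq0$, using $\operatorname{char}k=0$. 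Combining the two facts, $M$ contains all constants and all linear forms, i.e. $P[n]_{\le1}\subseteq M$; in particular $\widehat F-F=\sigma\circ F$ for some $\sigma\in S[n]$.

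It remains to see that $\sigma$ can be taken in $S[n]_+$. Writing $\sigma=\sigma_0+\sigma'$ with $\sigma_0\in k$ and $\sigma'\in S[n]_+$, the degree--$s$ homogeneous component of $\sigma\circ F=\sigma_0F+\sigma'\circ F$ equals $\sigma_0F_s$, whereas $\widehat F-F$ has degree $\le1<s$; hence $\sigma_0F_s=0$, so $\sigma_0=0$ and $\sigma\in S[n]_+$, and the remark applies. The one inequality that has to be watched is $s\ge2$, used here to kill $\sigma_0$: if $s\le1$, then $\Ann(F)\subseteq S[n]_+^2$ forces $n=1$ and $F=cy_1$, i.e. $A\cong k[[x]]/(x^2)$, which is checked by hand. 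Apart from this degenerate situation the argument is complete once \eqref{DerMod} is granted; the only real content is the observation that non--degeneracy of $M$ is exactly what forces $P[n]_{\le1}$ to lie inside $\langle F\rangle_{S[n]}$, and that the perturbing term being of degree $<\deg F$ automatically supplies an operator without constant term.
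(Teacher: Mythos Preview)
Your proof is correct (for $s=\deg F\ge2$) and takes a genuinely different route from the paper. The paper proves the two inclusions directly: $\Ann(F)\subseteq\Ann(\widehat F)$ is immediate because operators of order $\ge2$ kill $P[n]_{\le1}$, and for the reverse it assumes $\sigma\in\Ann(\widehat F)\setminus\Ann(F)$, shows $\sigma$ must have order $1$ with $\sigma\circ F=\lambda\in k^\times$, then adds a $\tau$ of order $s$ with $\tau\circ F_s=-\lambda$ to obtain an order-$1$ element of $\Ann(F)$, contradicting $\Ann(F)\subseteq S[n]_+^2$. You instead realize $\widehat F$ as $(1+\sigma)\circ F$ with $\sigma\in S[n]_+$, so that $\langle\widehat F\rangle_{S[n]}=\langle F\rangle_{S[n]}$ and the annihilators coincide; the substance of your argument is the pleasant observation that non-degeneracy ($H_A(1)=n$) forces $P[n]_{\le1}\subseteq\langle F\rangle_{S[n]}$. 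Your approach is more module-theoretic and reuses the remark preceding the lemma in an essential way; the paper's is more hands-on and self-contained. Both are short.

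One correction on the edge case: when $s=1$ (hence $n=1$, say $F=cy_1$), the lemma as stated is actually \emph{false}---take $\widehat F=0$, which satisfies $F-\widehat F\in P[1]_{\le1}$ but has $\Ann(\widehat F)=S[1]\ne(x_1^2)=\Ann(F)$---so your ``checked by hand'' does not go through. The paper's proof has the same hidden assumption (its $\tau$ has order $s$, ``thus at least $2$''), so this is a missing hypothesis $s\ge2$ in the lemma itself rather than a defect of either argument; in every application in the paper $s\ge3$.
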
}
\begin{proof} 
{From $\Ann(F)\subseteq S[n]_+^2$, it follows
$\Ann(F) \subseteq \Ann(\widehat{F})$. In fact, $ \sigma \circ H =
0 $ for each $ H \in P[n]_{\leq 1} $ and every $ \sigma $ of order
at least $ 2.$ The same argument shows that each $ \sigma \in
\Ann(\widehat{F}) $ of order $ \geq 2 $ belongs to $ \Ann(F).$}

{Assume by contradiction that
$\sigma\in\Ann(\widehat{F})\setminus\Ann(F)$. Then, $ \sigma $ has
order $ 1,$ and $ \sigma \circ F = \sigma \circ F_1 = \lambda
\not= 0$. By degree reasons, $ \lambda \in k$. Let $\tau\in S[n]$
be of order $s$ (thus at least $2$) such that $\tau\circ
F_s=-\lambda$. It follows that $\sigma+\tau\in\Ann(F)$ has order
$1$, and so we get a contradiction because $ \Ann(F)\subseteq
S[n]_+^2$. Hence $\Ann(F) = \Ann(\widehat{F})$ as claimed.}
\end{proof}

In particular, $J\subseteq S[n]_+^2$,
then we can assume that $J=\Ann(F)$ where $F=F_{\ge2}$. We will always make such an assumption in what follows.

We have a filtration with proper ideals (see \cite{Ia2}) of $\gr(A)\cong S[n]/\ldf(\Ann(F))$
$$
C_A(0):=\gr(A)\supset C_A(1)\supseteq C_A(2)\supseteq \dots\supseteq C_A(s-2)\supseteq C_A(s-1):=0.
$$
{Via the epimorphism $S[n]\twoheadrightarrow \gr(A)$ we obtain an induced filtration
$$
\widehat{C}_A(0):=S[n]\supset \widehat{C}_A(1)\supseteq \widehat{C}_A(2)\supseteq \dots\supseteq \widehat{C}_A(s-2)\supseteq \widehat{C}_A(s-1):=\ldf(\Ann(F)).
$$}

The quotients $Q_A(a):=C_A(a)/C_A(a+1)\cong  \widehat{C}_A(a)/ \widehat{C}_A(a+1)$ are reflexive graded
$\gr(A)$--modules whose Hilbert function is symmetric around  $(s-a)/2$. In general $\gr(A)$ is no more Gorenstein, but the first quotient
\begin{equation}
\label{GorQuot}
G(A):=Q_A(0)\cong S[n] /\Ann(F_s)
\end{equation}
is characterized by the property of being the unique (up to
isomorphism) graded Gorenstein quotient  $k$--algebra of $\gr(A)$
with the same socle degree. The Hilbert function of $A$ satisfies
\begin{equation}
\label{GorDec}
H_A(i)=H_{\gr(A)}(i)=\sum_{a=0}^{s-2}H_{Q_A(a)}(i),\qquad i\ge0.
\end{equation}
Since $H_A(0)=H_{G(A)}(0)=1$, it follows that if $a\ge1$, then $Q_A(a)_0=0$, whence $Q_A(a)_i=0$ when $i\ge s-a$ (see \cite{Ia2}) for the same values of $a$.

{Moreover
$$
H_{\gr(A)/C_A(a+1)}(i)=H_{S[n]/\widehat{C}_A(a+1)}(i)=\sum_{\alpha=0}^{a}H_{Q_A(\alpha)}(i),\qquad i\ge0.
$$
We set
$$
f_h:=\sum_{\alpha=0}^{s-h}H_{Q_A(\alpha)}(1)=H_{S[n]/\widehat{C}_A(s-h+1)}(1)=H_{\gr(A)/C_A(s-h+1)}(1)
$$}
(so that $n=H_A(1)=f_{2}$). 

Finally we introduce the following new invariant.

\begin{definition}
\label{dCapital}
Let $A$ be a local, Artinian $k$--algebra with maximal ideal $\fM$ and $s:=\sdeg(A)$. The {\sl capital degree}\/, $\cdeg(A)$, of $A$ is defined as the maximum integer $i$, if any, such that $H_A(i)>1$, $0$ otherwise. If $c=\cdeg(A)$ we also say that $A$ is a $c$--stretched algebra (simply stretched if $c\le 1$)
\end{definition}

By definition $\cdeg(A)\ge0$ and $\cdeg(A)\le \sdeg(A)$: if $A$ is Gorenstein, then we also have $\cdeg(A)< \sdeg(A)$.

We recall some facts about algebras $A$ with $\cdeg(A)\le 1$. Trivially, when $\cdeg(A)=0$, then $A\cong S[1]/(x_1^{s+1})$ (recall that $S[1]:=k[[x_1]]$).

Stretched algebras have been completely classified in \cite{Sa1}, with a particular attention to the Gorenstein case. There are many results about $2$--stretched Gorenstein algebras. A complete description of such algebras when $\sdeg(A)=3$ can be found in \cite{E--R1}. $2$--stretched algebras with $H_A(2)=2$ have been examined in several papers (see e.g. \cite{E--V2} or \cite{C--N1}). A complete classification in the case $\sdeg(A)\ge4$ and $H_A(2)=3$ can be found in \cite{C--N1}. Some very partial results are known when $\cdeg(A)=2$, $\sdeg(A)=4$ and $H_A(2)=4$ (see  \cite{C--N2}).

\section{On the homogeneous summands of the apolar polynomial}\label{sPol}
Let $A\cong S[n]/J$ be an Artinian, Gorenstein $k$--algebra where $J=\Ann(F)$ for a suitable $F=F_{\ge 2}\in P[n]$. Such a polynomial strongly depends on the representation of $A$ as a quotient of $S[n]$.

For reader's benefit we recall (see \cite{Ia2}, Theorems 5.3A and 5.3B) in this section that it is always possible to choose a system of generators of $S[n]_+$ such that the $F$ satisfies $F_i\in P[f_i]$. If $A\cong S[n]/\Ann(F)$, then such a property is not authomatically satisfied by $F$, due to the possible existence of {\sl exotic summands} in the homogeneous decomposition of $F$ as the following well--known example shows.

\begin{example}
Let $J:=(x_2^2,20x_1^2x_2-x_1^4)\subseteq S[2]$, then $J^\perp=\langle F\rangle_{S[2]}$, where $F:=y_1^5+y_1^3y_2$. In particular
$$
F_5=y_1^5,\qquad F_4=y_1^3y_2,\qquad F_3=F_2=0.
$$
We have
$\tdf(F)_5=\langle y_1^5\rangle$, $\tdf(F)_4=\langle y_1^4\rangle$, $\tdf(F)_3=\langle y_1^3\rangle$, $\tdf(F)_2=\langle y_1^2,y_1y_2\rangle$, $\tdf(F)_1=\langle y_1,y_2\rangle$, thus relation \eqref{DerMod} implies $H_A=(1,2,2,1,1,1)$. Thus $H_{G(A)}=(1,1,1,1,1,1)$, $H_{Q(1)}=(0,0,0,0,0,0)$, $H_{Q(2)}=(0,1,1,0,0,0)$. In particular $F_4\not\in P[f_4]$ because $f_4=1$ in our case.
\end{example}

\begin{proposition}
\label{pChange}
Let $A$ be a local, Artinian, Gorenstein $k$--algebra. If $n:=H_A(1)$ and $s:=\sdeg(A)$, then
$$
A\cong S[n]/\Ann(F)
$$
where $F:=\sum_{i=2}^sF_i+\sum_{j=f_3+1}^ny_j^2$, $F_i\in P[f_i]_i$, $i\ge3$, and $F_2\in P[f_3]_2$.
\end{proposition}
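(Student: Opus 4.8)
The plan is to start from the representation $A\cong S[n]/\Ann(F)$ with $F=F_{\ge2}$ (licit by the Lemma), and then successively replace $F$ by an apolar polynomial of the same annihilator but with better-controlled homogeneous summands, processing the degrees from $s$ downwards. The key fact repeatedly used is that $\Ann(F)=\Ann(F+\sigma\circ F)$ for every $\sigma\in S[n]$, so one is allowed to add to $F$ any ``lower-order derivative'' of $F$. Recall also the characterization of the invariants $f_h$ as $H_{S[n]/\widehat C_A(s-h+1)}(1)$, equivalently the number of variables genuinely needed in $\tdf(F)$ up through the appropriate filtration step; the target is exactly to absorb the ``exotic'' appearances of the extra variables into a controlled tail $\sum_{j=f_3+1}^n y_j^2$.

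First I would treat the top form $F_s$. By \eqref{GorQuot}, $G(A)\cong S[n]/\Ann(F_s)$ is the graded Gorenstein quotient of $\gr(A)$ with socle degree $s$, and $H_{G(A)}(1)=f_s$ (since $Q_A(0)_1$ accounts for $f_s$ variables); hence after a linear change of the $x_i$ (dually of the $y_j$) we may assume $F_s\in P[f_s]_s$. Then I proceed inductively: suppose $F_j\in P[f_j]_j$ has been arranged for all $j>i$ (with $i\ge3$); I want $F_i\in P[f_i]_i$. The obstruction is that $F_i$ may involve variables $y_j$ with $j>f_i$. The standard device (this is precisely Iarrobino's argument behind \cite{Ia2}, Thms.~5.3A/5.3B, which I would invoke in the form quoted in the excerpt) is that such a term, being ``visible'' only in degree $i$ and not propagated downward by the module structure of $\tdf(F)$, can be removed by an analytic (non-linear) coordinate change — i.e.\ by replacing $y_j$ by $y_j$ plus higher-order corrections, equivalently by subtracting a suitable $\sigma\circ F$ with $\ord(\sigma)\ge1$. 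Carrying this out for $i=s-1,s-2,\dots,3$ yields $F_i\in P[f_i]_i$ for all $i\ge3$; the quadratic part $F_2$ then automatically lies in $P[f_3]_2$ because the variables $y_{f_3+1},\dots,y_n$ enter $\gr(A)$ only via $Q_A(s-2)$ in degrees $1$ and $2$ with $H_{Q_A(s-2)}$ symmetric around $1$, so up to a final change of coordinates among $y_{f_3+1},\dots,y_n$ those variables appear in $F$ only through a nondegenerate quadratic form, which over an algebraically closed field of characteristic $0$ can be normalized to $\sum_{j=f_3+1}^n y_j^2$ and decoupled from $F_2\in P[f_3]_2$.

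The main obstacle I anticipate is bookkeeping the coordinate changes so that cleaning degree $i$ does not destroy the normal forms already achieved in degrees $>i$: a change of the form $y_j\mapsto y_j+(\text{higher order})$ only affects components of $F$ of degree $\le i$, so processing from the top down is safe, but one must check carefully that the induced change on $\tdf(F)$ keeps $f_j$ unchanged and that the ``new'' $F_i$ still lies in $P[f_i]$ after the change — i.e.\ that no variable beyond the first $f_i$ is reintroduced. The other delicate point is the very last step: verifying that the extra variables $y_{f_3+1},\dots,y_n$ contribute to $F$ only a quadratic form in those same variables (no cross terms with $y_1,\dots,y_{f_3}$ surviving), which follows from $H_{Q_A(a)}(i)=0$ for $i\ge s-a$ together with $Q_A(s-2)$ having Hilbert function $(0,\ell,\ell,0,\dots)$ with $\ell=n-f_3$; I would make this precise using \eqref{GorDec} and the symmetry of the $H_{Q_A(a)}$. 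Once that is in place, diagonalizing the quadratic form gives the stated shape of $F$.
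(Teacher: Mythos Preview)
Your strategy matches the paper's: first invoke Iarrobino's Theorems 5.3A/5.3B to arrange $F_i\in P[f_i]_i$ for $i\ge3$, then normalize the quadratic part. The paper in fact cites Iarrobino outright for the first step rather than reproving it top-down, so your inductive sketch is extra work but harmless.

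Where you diverge is in the justification of the quadratic step, and here there is a slip. You claim $H_{Q_A(s-2)}=(0,\ell,\ell,0,\dots)$, but the symmetry is around $(s-(s-2))/2=1$, so with $Q_A(s-2)_0=0$ one gets $H_{Q_A(s-2)}=(0,\ell,0,\dots)$. More to the point, the filtration by itself does not directly tell you that the quadratic form in $y_{f_3+1},\dots,y_n$ is nondegenerate or free of cross terms; that is a statement about the specific polynomial $F$, not about $\gr(A)$. The paper handles this concretely: after a linear change among the $y_j$ with $j>f_3$ the degree-$2$ part becomes $\sum_{j>f_3}\lambda_j y_j^2+Q$ with $\lambda_j\in\{0,1\}$ and $Q$ containing only monomials touching $y_1,\dots,y_{f_3}$; then one uses non-degeneracy of $F$ (i.e.\ $y_j\in\tdf(F)_1$ for all $j$) and the fact that $F_i\in P[f_3]$ for $i\ge3$ to see that the only derivative of $F$ that can produce $y_j$ for $j>f_3$ is $x_j\circ F=2\lambda_j y_j+x_j\circ Q$, forcing $\lambda_j=1$. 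A final linear change of the $y_j$ with $j>f_3$ (completing the square) absorbs the cross terms of $Q$ into $P[f_3]_2$. Your outline reaches the same destination, but you should replace the appeal to $H_{Q_A(s-2)}$ by this direct derivative argument to close the gap.
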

\begin{proof}
Thanks to the aforementioned Theorems 5.3A and 5.3B of \cite{Ia2} (see also the thesis \cite{Jel}: in particular see Theorem 4.38 where an expanded version of the proof is provided), we know the existence of a representation $A\cong S[n]/\Ann(\sum_{i=2}^sF_i)$ with $F_i\in P[f_i]_i$. Now we prove that $F_2$ can be actually written as the sum of something in $P[f_3]$ plus $\sum_{f_3+1}^ny_i^2$.

Now we prove that we can make a suitable linear change of $y_1,\dots,y_n$ in such a way that the linear space generated by $y_1,\dots,y_{f_3}$ remains unchanged and such that the homogeneous part of degree $2$ of $F$ is $F_2+\sum_{j=f_3+1}^ny_j^2$ where $F_2\in P[f_3]$. First of all, up to a suitable linear transformation of the variables $y_{f_3+1},\dots,y_n$ we can assume that such an homogeneous part of $F$ is
$$
\sum_{j={f_3}+1}^{n}\lambda_jy_j^2+Q
$$
where $\lambda_j\in \{\ 0,1\ \}$ and $Q=\sum_{i=1}^{f_3}\sum_{j=i}^{n}q_{i,j}y_iy_j$.

Since $H_A(1)=n$, we know that the classes of $y_1,\dots,y_{n}$ are in $\tdf( F)$. It follows that we have relations of the form
$$
\sum_{i=1}^{n} u_i(x_i\circ F)+\text{linear combination of derivatives of $F$ of order at least $2$}=y_j+\text{constant}.
$$
Since $F_i\in  P[{f_i}]_i\subseteq P[{f_3}]_i$, $i\ge 3$, we deduce $x_j\circ F=2\lambda_jy_j+x_j\circ Q$.

The only derivatives of $F$ of degree $s-1$ are $x_j\circ F$, $j=1,\dots,f_{s-1}$. They are linearly independent because $\dim_k(\tdf(F)_{s-1})=H_A(s-1)=f_{s-1}$.  It follows that $u_j=0$, $j=1,\dots,f_s$. Since no derivatives of order $2$ contain $y_j$, $j\ge {f_3}+1$ (recall that $F_i\in P[{f_3}]_i$, $i\ge3$), it also follows that the linear combination of such derivatives in the first member of the above equality must be  a costant. We conclude that
$$
\sum_{i={f_3}+1}^{n} u_i(x_i\circ F)=y_j+\text{constant}.
$$
We deduce that necessarily $\lambda_j=1$, $j\ge {f_3}+1$ (recall that $\lambda_j\in\{\ 0,1\ \}$ and $x_j\circ Q\in P[{f_3}]$): up to a suitable linear transformation in the variables $y_1,\dots,y_{n}$ fixing $y_1,\dots,y_{f_3}$ we can finally assume that $F_2:=Q\in P[{f_3}]_2$.

The proof of the statement is complete.
\end{proof}

\section{The structure theorem}
\label{sTail}
Now we turn our attention to the case $\cdeg(A)=2$. Let $H_A=(1,n,m,1,\dots,1)$.
We first prove the following preparatory lemma improving Proposition \ref{pChange} in such a particular case.

\begin{lemma}
\label{lStruct}
Let $A$ be a local, Artinian, Gorenstein $2$--stretched $k$--algebra. If $n:=H_A(1)$, $m:=H_A(2)$, $s:=\sdeg(A)$, then
$$
A\cong S[n]/\Ann(F),
$$
where $F:=y_1^s+F_3+F_2+\sum_{j=m+1}^ny_j^2$, $F_i\in P[m]_i$, $x_1^2\circ F_3=x_1^2\circ F_2=0$ and $x_2\circ F_{3},\dots, x_{m}\circ F_{3}$ are linearly independent.
\end{lemma}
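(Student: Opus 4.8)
The plan is to start from the representation guaranteed by Proposition \ref{pChange}, namely $A\cong S[n]/\Ann(F)$ with $F=\sum_{i=2}^sF_i+\sum_{j=f_3+1}^ny_j^2$, $F_i\in P[f_i]_i$ for $i\ge3$ and $F_2\in P[f_3]_2$. Since $\cdeg(A)=2$ we have $H_A=(1,n,m,1,\dots,1)$, so $f_2=n$, $f_3=m$, and $f_i=1$ for every $i\ge4$ by the Gorenstein decomposition \eqref{GorDec} (each $Q_A(a)$ contributes $1$ in degree $1$ to the tail). Hence $F_i\in P[1]_i$ for $i\ge4$, so $F_i=c_iy_1^{\beta_i}$ up to renaming; more precisely $F_i=c_iy_1^i$, and the leading term $F_s=c_sy_1^s$ with $c_s\ne0$. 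Rescaling $y_1$ we may assume $F_s=y_1^s$. So at this stage $F=y_1^s+\sum_{i=4}^{s-1}c_iy_1^i+F_3+F_2+\sum_{j=m+1}^ny_j^2$ with $F_2,F_3\in P[m]$.

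Next I would kill the middle powers of $y_1$. For each $4\le i\le s-1$, pick $\sigma_i\in S[n]$ of order $s-i\ge1$ (in fact we can take $\sigma_i$ a scalar multiple of $x_1^{\,s-i}$ when $s-i\ge 2$, and for $i=s-1$ use $x_1$ times... — here one must be slightly careful since $\sigma$ of order $1$ is not harmless). The cleaner route: recall the remark preceding Lemma 2.x that $\Ann(F)=\Ann(F+\sigma\circ F)$ for every $\sigma\in S[n]$; apply $\sigma=-\tfrac{c_i}{s!/(s-i)!}\,x_1^{s-i}$ iteratively from $i=s-1$ downward. Each such $\sigma$ has order $s-i$; the term $x_1^{s-i}\circ y_1^s$ is a nonzero multiple of $y_1^i$, and $x_1^{s-i}\circ(\text{lower }y_1\text{ powers})$ only affects powers $<i$, while $x_1^{s-i}\circ F_3$, $x_1^{s-i}\circ F_2$ and $x_1^{s-i}\circ(\sum y_j^2)$ all lie in $P[m]_{\le 1}$ hence can be absorbed by the previous lemma (or directly: adding something in $P[n]_{\le 1}$ does not change $\Ann$ since $\Ann(F)\subseteq S[n]_+^2$). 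Working from top degree down, this removes every $c_iy_1^i$ with $4\le i\le s-1$ without reintroducing anything of degree $\ge 3$ outside $P[m]$ or anything of degree $\ge 4$ involving $y_2,\dots$, so $F$ becomes $y_1^s+F_3+F_2+\sum_{j=m+1}^ny_j^2$ with $F_2,F_3\in P[m]$.

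Then I would verify the three constraints on $F_3$. For $x_1^2\circ F_3=0$: the derivatives of $F$ of degree $s-3$ that are "visible" at the top are controlled by the Gorenstein decomposition; in degree $3$ we have $H_A(3)=1$, so $\tdf(F)_3$ is spanned by $y_1^3$ (it must be, since $y_1^s\in\tdf(F)$ forces $y_1^3\in\tdf(F)$). Now $x_1^2\circ F=s(s-1)y_1^{s-2}+x_1^2\circ F_3+x_1^2\circ F_2$; its top-degree form is $y_1^{s-2}$ when $s\ge5$, consistent with $\tdf(F)_{s-2}=\langle y_1^{s-2}\rangle$. The point is rather that $x_1^2\circ F_3\in P[m]_1$ and one wants it to be $0$: I'd argue via $H_A(2)=m$, i.e. $\tdf(F)_2$ has dimension $m$; the degree-$2$ forms of derivatives of $F$ are the $x_i\circ F_3$ (which lie in $P[m]_2$) together with $x_ix_j\circ F$ type terms, and counting forces $x_2\circ F_3,\dots,x_m\circ F_3$ to be linearly independent (they span a space of dimension $m-1$ inside $\tdf(F)_2$, the remaining dimension coming from $x_1^2\circ F_s$ pattern $y_1^2$ and the $y_j^2$ part). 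Simultaneously $x_1\circ F_3$ must be such that the only new degree-$2$ tail contribution is $y_1^2$ from $y_1^s$; a change of coordinates fixing $\langle y_1\rangle$ and $\langle y_1,\dots,y_m\rangle$ lets us arrange $x_1^2\circ F_3=0$, exactly as in the proof of Proposition \ref{pChange}. The analogous (easier) argument gives $x_1^2\circ F_2=0$.

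The main obstacle I anticipate is the bookkeeping in the coordinate change that imposes $x_1^2\circ F_3=0$ and the linear independence of $x_2\circ F_3,\dots,x_m\circ F_3$ \emph{simultaneously}, while not disturbing the normal forms already achieved ($F_s=y_1^s$, no intermediate $y_1$-powers, $F_2,F_3\in P[m]$, and the $\sum_{j>m}y_j^2$ tail): one must check that the relevant linear transformation can be taken block-triangular with respect to the flag $\langle y_1\rangle\subset\langle y_1,\dots,y_m\rangle\subset P[n]_1$, and that conjugating $x_1^2$ and the $x_i$ accordingly produces the stated vanishing. This is the same mechanism as in Proposition \ref{pChange} (the $\tdf(F)$-dimension count pins down which derivatives must be independent and which linear combinations must be constant), so I would present it by reducing to that argument rather than redoing it, and the genuinely new content is just the elimination of the exotic $y_1$-powers $c_iy_1^i$, $4\le i\le s-1$, via the $\Ann(F)=\Ann(F+\sigma\circ F)$ trick described above.
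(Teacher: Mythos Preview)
Your setup and the elimination of the intermediate powers $c_iy_1^i$ for $4\le i\le s-1$ are fine and match the paper. The linear--independence claim for $x_2\circ F_3,\dots,x_m\circ F_3$ via $\dim_k\tdf(F)_2=m$ is also the paper's argument. The genuine gap is the step ``a change of coordinates fixing $\langle y_1\rangle$ and $\langle y_1,\dots,y_m\rangle$ lets us arrange $x_1^2\circ F_3=0$, exactly as in the proof of Proposition~\ref{pChange}.'' This is not what Proposition~\ref{pChange} does (there only the quadratic tail is normalized by a \emph{linear} change), and in fact no linear change of the $y$'s works here: any linear substitution fixing $y_1$ sends $y_1^2L(y_2,\dots,y_m)$ to $y_1^2L'(y_2,\dots,y_m)$ with $L'\ne0$ whenever $L\ne0$, while a substitution moving $y_1$ destroys $F_s=y_1^s$ for $s\ge4$. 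Likewise the trick $\Ann(F)=\Ann(F+\sigma\circ F)$ cannot touch the $y_1^2y_j$ terms, because the degree--$3$ part of $\sigma\circ F$ is $\sum_i\sigma_i\circ F_{i+3}$ and for $i\ge1$ each $F_{i+3}\in P[1]$, so the only degree--$3$ contribution obtainable is a multiple of $y_1^3$.

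The missing idea is a \emph{nonlinear} automorphism of $S[n]$ on the $x$--side. Writing $F_3=\sum_{|\alpha|=3}\frac{s!}{\alpha!}u_\alpha y^\alpha$, one has $x^{2e_1+e_j}-u_{2e_1+e_j}x_1^s\in\Ann(F)$; the automorphism $\varphi$ defined by $\varphi(x_j)=x_j-u_{2e_1+e_j}x_1^{\,s-2}$ for $j=2,\dots,m$ and $\varphi(x_j)=x_j$ otherwise then sends $\Ann(F)$ to the annihilator of a new polynomial $\widehat F$ with $x_1^2x_j\in\Ann(\widehat F)$ for $j=2,\dots,m$. This forces $x_1^2\circ\widehat F_3\in k\cdot y_1$, after which adding suitable multiples of $x_1^{s-3}\circ\widehat F$ (and $x_1^{s-2}\circ\widehat F$ for $F_2$) kills the remaining $y_1^3$ and $y_1^2$ coefficients and the intermediate $y_1^i$ all at once. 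One must also check that $\varphi$ preserves the shape already achieved ($\widehat F_i\in P[1]$ for $i\ge4$, $\widehat F_3\in P[m]$, quadratic tail $\sum_{j>m}y_j^2$); this follows because $\widehat x^\alpha\in\Ann(F)$ in the same ranges as before. Without this nonlinear step your argument does not close.
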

\begin{proof}
If $m=1$ (in particular if $n=1$), then the statement is trivial. Thus we can assume $n,m\ge2$, whence $s\ge3$. 

In Decomposition \eqref{GorDec} we have
$$
H_{Q(i)}=\left\lbrace\begin{array}{ll}
(1,1,1,1,\dots,1)\qquad&\text{if $i=0$,}\\
(0,0,0,0,\dots,0)\qquad&\text{if $i=1,\dots,s-4$,}\\
(0,m-1,m-1,0,\dots,0)\qquad&\text{if $i=s-3$,}\\
(0,n-m,0,0,\dots,0)\qquad&\text{if $i=s-2$.}
 \end{array}\right.
$$
It follows that $f_2=n$, $f_3=m$ and $f_4=\dots=f_s=1$. Thanks to Proposition \ref{pChange} there exists an isomorphism $A\cong S[n]/\Ann(F)$, where $F:=\sum_{i=2}^sF_i+\sum_{j=f_3+1}^ny_j^2$, $F_2\in P[m]_2$, $F_3\in P[m]_3$ and $F_i\in P[1]_i$, $i=4,\dots,s$. In particular if $s\ge4$, then $G(A)\cong S[1]/(y_1^s)$, thus we can assume that $F_s=y_1^s$ thanks to Formula \eqref{GorQuot}.  

{If $s=3$ then $F=\widetilde{F}_3+F_2$. Ut to a linear change of variables in $S[n]$ we can assume that the coefficient of $y_1^3$ in $\widetilde{F}_3$ is $1$, so that $\widetilde{F}_3=y_s^3+F_3$ with $x_1^3\circ {F}_3=0 $ again. }

Now let
$$
F_3=\sum_{{\alpha\in \bN^m}\atop {\vert\alpha\vert=3} }\frac{s!}{\alpha!}u_\alpha y^\alpha,
$$
so that $x^\alpha-u_\alpha
x_1^s\in\Ann(F)$ when $\alpha\in \bN^m$ and $\vert\alpha\vert=3$. 

{Let us consider the automorphism $\varphi$ of $S[{n}]$ defined by
$$
\varphi({x}_j)=\widehat{x}_j:=\left\lbrace\begin{array}{ll}
x_1\qquad&\text{if $ j\ne2,\dots,m$,}\\
x_{j}-u_{2e_1+e_j}x_1^{s-2}\qquad&\text{if $ j=2,\dots,m$.}
 \end{array}\right.
$$
We have an isomorphism
$$
S[{n}]/\varphi^{-1}(\Ann(F))\cong S[{n}]/\Ann(F)\cong A.
$$
We conclude the existence of  $\widehat{F}\in P[{n}]$ such that $\varphi^{-1}(\Ann(F))=\Ann(\widehat{F})$. Let $\widehat{F}=\sum_{i=2}^s\widehat{F}_i$. }

{Due to the definition of $\varphi$, we have that $\widehat{x}^\alpha\in\Ann(F)$ if either $\vert\alpha\vert\ge4$ and $\alpha\ne\vert\alpha\vert e_1$, or if $\widehat{x}^\alpha$ does not contain $x_1,\dots,x_m$ and $\vert\alpha\vert\ge3$, or if $\alpha=2e_1+e_j$, $j=2,\dots,m$. It follows that $x^\alpha\in\varphi^{-1}(\Ann(F))=\Ann(\widehat{F})$ in the same ranges.}

{The first condition implies that we can still assume $\widehat{F}_i\in P[1]$, $i=4,\dots,s$. The second that $\widehat{F}_3\in P[m]$. The third condition implies $x_1^2\circ \widehat{F}_3\in P[1]_1$. A similar argument with monomials of degree $2$ shows that the degree $2$ component of $\widehat{F}$ can be decomposed as $\widehat{F}_2+\sum_{j=m+1}^ny_j^2$ with $\widehat{F}_2\in P[m]$. }

{Summing to $\sum_{i=2}^s\widehat{F}_i$ a suitable linear combination of the derivatives $x_1^i\circ \widehat{F}$ we can finally assume that
$A\cong S[n]/\Ann(\widehat{F})$ where $\widehat{F}:=y_1^s+\widehat{F}_3+\widehat{F}_2+\sum_{j=m+1}^ny_j^2$, and $x_1^3\circ \widehat{F}_3=x_1^2\circ \widehat{F}_2=0$. }

Finally $\tdf(\widehat{F})_2$ is generated by the classes of $y_1^2$ and $x_2\circ \widehat{F}_3,\dots,x_m\circ \widehat{F}_3$. Since
$$
\dim(\tdf(\widehat{F})_2)=H_A(2)=m,
$$
we conclude that $x_2\circ \widehat{F}_3,\dots,x_m\circ \widehat{F}_3$ are linearly independent.
\end{proof}

{\begin{remark}
\label{rTail}
The above lemma can be easily generalized to any local, Artinian, Gorenstein $c$--stretched algebra $A$ for each $c$ as follows. If $n:=H_A(1)$, $m:=H_A(2)$, $s:=\sdeg(A)$, then
$$
A\cong S[n]/\Ann(F),
$$
where $F:=y_1^s+\sum_{i=2}^{c+1}F_i+\sum_{j=m+1}^ny_j^2$, $F_i\in P[f_i]_i$, $i\ge3$, $F_2\in P[f_3]_i$, $x_1^c\circ F_{c+1}=x_1^i\circ F_i=0$, $i=3,\dots, c+1$ and $x_2\circ F_{c+1},\dots, x_{m}\circ F_{c+1}$ are linearly independent.
\end{remark}}

Following the method used to deal with algebras $A$ with $\sdeg(A)=3$ (in \cite{E--R1}) and with compressed algebras (in \cite{E--R2}), we will show how to construct, for each local, Artinian, Gorenstein $k$--algebra $A$ with $H_A=(1,n,m,1,\dots,1)$, $\sdeg(A)=s$, a distinguished polynomial $F_3\in P[m]_3$ such that $A\cong S[n]/\Ann(y_1^s+F_3+\sum_{j=m+1}^ny_j^2)$.

Let $F=y_1^s+F_3+F_2+\sum_{j=m+1}^ny_j^2$ be as in the statement of Lemma \ref{lStruct} and set $A:= S[n]/\Ann(F)$. We look for a particular algebra automorphism $\varphi$ of $S[n]$ mapping $\Ann(F)$ to $\Ann(F_{simple})$ where
$$
F_{simple}:=F-F_2=y_1^s+F_3+\sum_{j=m+1}^ny_j^2.
$$

If we set $\varphi(x_i)=z_i$, then $z_1,\dots,z_n$ is a new minimal set of generators of $S[n]_+$. Thus each $\varphi\in\Aut(S[n])$ induces an element in $\Aut({S[n]/S[n]_+^{s+1}})$ for each $s$ that, improperly, we again denote by $\varphi$. Such an algebra is also a finitely generated vector space on $k$: we fix the basis
$$
{\mathcal X}:=\left(x^\alpha \right)_{\alpha\in{\Bbb N}^n,\vert\alpha\vert\le s}
$$
given by the monomials ordered first by increasing degree and then lexicographically. Thus we can identify each element of $\Aut ({S[n]/S[n]_+^{s+1}})$ with a suitable square matrix. It is immediate to check that the dual basis in $P[n]_{\le s}$ with respect to the perfect pairing $\langle\cdot,\cdot\rangle$ is
$$
{\mathcal Y}:=\left(\frac{1}{\alpha!}y^\alpha \right)_{\alpha\in{\Bbb N}^n,\vert\alpha\vert\le s}
$$
still ordered first by increasing degree and then lexicographically.

Since, in our case, $S[n]_+^{s+1}\subseteq \Ann(F)$, it follows that finding the automorphism $\varphi\in\Aut(S[n])$ mapping $\Ann(F)$ to $\Ann(F_{simple})$ is equivalent to finding an automorphism $\widehat{\varphi}\in \Aut({S[n]/S[n]_+^{s+1}})$ mapping $\Ann(F)/S[n]_+^{s+1}$ to $\Ann(F_{simple})/S[n]_+^{s+1}$.

By duality each automorphism $\varphi\in\Aut(S[n])$ mapping $\Ann(F)$ to $\Ann(F_{simple})$ corresponds to an isomorphism
$$
\varphi^{*{}} \colon(S[n]/\Ann(F_{simple}))^*\longrightarrow(S[n]/\Ann(F))^*
$$
which we interpret as an isomorphism of the subspaces $\varphi^{*}\colon \Ann(F_{simple})^\perp\to\Ann(F)^\perp$ of $P[n]_{\le s}\subseteq P[n]$.

As explained in \cite{E--R1} and \cite{E--R2} via such a correspondence, the matrix $M(\varphi^{*})$ associated to $\varphi^{*}$ with respect to the basis ${\mathcal Y}$ is exactly the transpose of the inverse of the matrix $M(\varphi)$ associated to the morphism $\varphi$ with respect to the basis ${\mathcal X}$.

We are now ready to prove the main result of the paper. It is a structure theorem for $2$--stretched algebras.

\begin{theorem}
\label{tStruct}
Let $A$ be a local, Artinian, Gorenstein $k$--algebra. Then $A$ is $2$--stretched  with $n=H_A(1)$, $m=H_A(2)$, $s=\sdeg(A)$ if, and only if,
$$
A\cong S[n]/\Ann(F)
$$
where $F:=y_1^s+F_3+\sum_{j=m+1}^ny_j^2$,  $F_3\in P[m]_3$, $x_1^2\circ F_3=0$ and $x_2\circ F_{3},\dots, x_{m}\circ F_{3}$ are linearly independent.
\end{theorem}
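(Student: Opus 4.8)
The statement is a biconditional and I would prove the two implications separately. The \emph{if} part is a direct computation of the inverse system of the given $F$; the \emph{only if} part rests on Lemma~\ref{lStruct}, which already delivers a normal form $F=y_1^s+F_3+F_2+\sum_{j>m}y_j^2$, and then on the construction of an automorphism of $S[n]$ absorbing the unwanted quadratic summand $F_2$, along the lines recalled just before the statement (and carried out in \cite{E--R1}, \cite{E--R2} in analogous situations). Throughout I may assume $m\ge2$: for $m=1$ the listed conditions force $F_3=0$, so $A$ is stretched rather than $2$--stretched and the case is degenerate.

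For the \emph{if} direction, let $F=y_1^s+F_3+\sum_{j=m+1}^ny_j^2$ with $F_3\in P[m]_3$, $x_1^2\circ F_3=0$ and $x_2\circ F_3,\dots,x_m\circ F_3$ linearly independent. I would compute $\tdf(F)_q$ degree by degree and invoke \eqref{DerMod}. Since the nonzero homogeneous components of $F$ sit in degrees $s$, $3$, $2$, and $x_1^2\circ F_3=0$ annihilates every derivative $x_1^{s-q}\circ F_3$ with $s-q\ge2$ while $x_1\circ F_3$ has degree $\le2$, one checks that $\tdf(F)_q$ is $1$--dimensional for $3\le q\le s$ and zero for $q>s$, so $H_A(q)=1$ there. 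In degree $2$ one sees that $\tdf(F)_2$ is spanned by $y_1^2$ and $x_2\circ F_3,\dots,x_m\circ F_3$, which are linearly independent (apply $x_1^2$ to a putative relation: the coefficient of $y_1^2$ vanishes because $x_1^2\circ F_3=0$, then linear independence of the $x_j\circ F_3$ finishes), so $H_A(2)=m$. Finally $\tdf(F)_1$ contains $y_1$ (from $x_1^{s-1}\circ F$), the $y_j$ with $j>m$ (from $x_j\circ F$), and all first partials of $x_2\circ F_3,\dots,x_m\circ F_3$; that these span $\langle y_1,\dots,y_m\rangle$ is the only point needing an argument, which I would give by duality: a linear form in $\langle y_2,\dots,y_m\rangle$ orthogonal to $\langle x_ix_j\circ F_3\ :\ 1\le i\le m,\ 2\le j\le m\rangle$ corresponds to $\ell\in S[m]_1$ supported on $x_2,\dots,x_m$ with $\ell\circ F_3\in\langle y_1^2\rangle$; applying $x_1^2$ and using $x_1^2\circ F_3=0$ gives $\ell\circ F_3=0$, hence $\ell=0$ by linear independence of $x_2\circ F_3,\dots,x_m\circ F_3$. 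Thus $H_A=(1,n,m,1,\dots,1)$ with $m\ge2$, i.e. $A$ is $2$--stretched with the asserted invariants.

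For the \emph{only if} direction, Lemma~\ref{lStruct} gives $A\cong S[n]/\Ann(F)$ with $F=y_1^s+F_3+F_2+\sum_{j>m}y_j^2$ satisfying all the listed constraints, in particular $\Ann(F)\subseteq S[n]_+^2$; put $F_{simple}:=F-F_2$. It suffices to exhibit $\varphi\in\Aut(S[n])$ with $\varphi^{*}(F_{simple})\equiv F\pmod{P[n]_{\le1}}$: then $\Ann(\varphi^{*}(F_{simple}))=\Ann(F)$ (adding terms of degree $\le1$ does not change an annihilator contained in $S[n]_+^2$), and since $\Ann(\varphi^{*}(F_{simple}))=\varphi^{-1}(\Ann(F_{simple}))$ we obtain $\varphi(\Ann(F))=\Ann(F_{simple})$, so $A\cong S[n]/\Ann(F_{simple})$, which is of the required shape. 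I would look for $\varphi$ with $\varphi(x_i)=x_i+\psi_i+c_ix_1^{s-1}$ for $i\le m$ and $\varphi(x_i)=x_i$ for $i>m$, with $c_i\in k$ and $\psi_i$ a quadratic form in $x_1,\dots,x_m$ having no $x_1^2$ term; such a $\varphi$ is automatically an automorphism, its linear part being the identity. Writing $\rho(g):=\varphi(g)-g$, the condition $\varphi^{*}(F_{simple})\equiv F\pmod{P[n]_{\le1}}$ unravels to $\langle\rho(x^\alpha),F_{simple}\rangle=\langle x^\alpha,F_2\rangle$ for all $\alpha$ with $|\alpha|\ge2$. Tracking the orders of the monomials in $\rho(x^\alpha)$ one checks that for $|\alpha|\ge3$ both sides vanish — the quadratic perturbations $\psi_i$ cannot pair with $y_1^s$ because they contain no $x_1^2$, and the perturbations $c_ix_1^{s-1}$ cannot pair with $F_3$ because $x_1^2\circ F_3=0$ — while for $|\alpha|=2$ the equations form a finite linear system in the scalars $\langle\psi_j,x_i\circ F_3\rangle$ and $c_i$. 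The functionals $\psi\mapsto\langle\psi,x_i\circ F_3\rangle$, $i=2,\dots,m$, are linearly independent on the space of admissible $\psi$ (here $x_1^2\circ F_3=0$ again guarantees that no non-trivial combination of the $x_i\circ F_3$ is a multiple of $y_1^2$, and the $x_i\circ F_3$ themselves are linearly independent), and the $c_i$ act as free parameters disposing of the equations in which $x_1$ occurs; hence the system is solvable and the desired $\varphi$ exists.

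The main difficulty lies in the \emph{only if} direction, on two counts. First, one must hit upon the right family of automorphisms: using only the quadratic perturbations $\psi_i$ one runs into a genuine obstruction when $x_1\circ F_3$ lies in the span of $x_2\circ F_3,\dots,x_m\circ F_3$, and the extra directions $c_ix_1^{s-1}$ are exactly what cancels it. Second, the order bookkeeping ensuring that the perturbations introduce no spurious component of degree $\ge2$ must be carried out carefully; this is where the normalisations $x_1^2\circ F_3=0$, $x_1^2\circ F_2=0$ and ``$\psi_i$ has no $x_1^2$'' are all used. The case $s=3$, where $y_1^s$ and $F_3$ share the top degree, calls for only minor bookkeeping adjustments (alternatively it is covered by \cite{E--R1}).
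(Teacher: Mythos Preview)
Your proof is correct and follows the same overall architecture as the paper's: the ``if'' direction by computing $\tdf(F)$ degree by degree, the ``only if'' direction by invoking Lemma~\ref{lStruct} and then constructing an automorphism of $S[n]$ that kills the residual $F_2$. The technical execution, however, differs in two places worth noting.

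For the ``if'' direction, your verification that $H_A(1)=n$ is a clean direct duality argument (a linear form orthogonal to all second partials of $F_3$ forces a relation among the $x_i\circ F_3$), whereas the paper passes through the auxiliary Gorenstein algebra $B=S[m]/\Ann(F_{\ge3})$ and invokes the symmetry of its Hilbert function via decomposition~\eqref{GorDec}. Your route is shorter and self-contained.

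For the ``only if'' direction, the paper searches within the family $\varphi(x_j)=x_j+\sum_{\gamma\in N(2)}b_{\gamma,j}x^\gamma$ (purely quadratic perturbations, no $x_1^2$) and reduces solvability to a rank statement about a block-triangular matrix $U$ whose structure is imported from \cite{E--R1}; the key input is that deleting the first row of $U$ leaves a matrix of maximal rank, while the first equation is automatically $0=0$ because $v_{2e_1}=x_1^2\circ F_2=0$. You instead enlarge the family by the extra directions $c_ix_1^{s-1}$. This is a genuine simplification: once the $c_i$ are available, the equations involving the index $1$ are absorbed for free, and what remains is solved by the surjectivity of $\psi\mapsto(\langle\psi,x_i\circ F_3\rangle)_{i=2}^m$, which you deduce directly from the linear independence of $x_2\circ F_3,\dots,x_m\circ F_3$ modulo $\langle y_1^2\rangle$. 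So you trade the citation of the \cite{E--R1} matrix lemma for a few extra parameters in $\varphi$; the paper's approach keeps the automorphism minimal but leans on external structure.

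One small remark: your sentence justifying the $|\alpha|\ge3$ vanishing compresses two separate observations (pure $\psi$-terms of the right degree have no $x_1^s$ because $\psi_i$ lacks $x_1^2$; any term containing a factor $c_ix_1^{s-1}$ has degree $\ge|\alpha|+s-2>s$). The phrase ``$c_ix_1^{s-1}$ cannot pair with $F_3$ because $x_1^2\circ F_3=0$'' is really the $s=3$ bookkeeping for the $|\alpha|=2$ equations rather than the $|\alpha|\ge3$ case; it would read more clearly if separated.
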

\begin{proof}
In the following we will set $N(h):=\{\ \alpha\in\bN^m\ \vert\ \vert\alpha\vert=h,\ \alpha\ne he_1\ \}$.

The \lq\lq if\rq\rq\ part is easy to prove. Since $F_3\in P[m]_3$ it follows that $\tdf(F)_q=\langle s!y_1^q+q!x_1^{s-q}\circ F_3\rangle$, $3\le q\le s$. Due to Equality \eqref{DerMod}, $H_A(q)=\dim_k(\tdf(F)_q)=1$ in the same range.
Clearly $\langle s!y_1^2+2x_1^{s-2}\circ F_3, x_2\circ F_3,\dots,x_m\circ F_3\rangle\subseteq\tdf(F)_2$. Since $x_2\circ F_{3},\dots, x_{m}\circ F_{3}$ are linearly independent and do not contain $y_1^2$, because $x_1^2\circ F_3=0$, it follows that equality holds, thus again $H_A(2)=\dim_k(\tdf(F)_2)=m$.

Similarly $\langle s!y_1+x_1^{s-1}\circ F_3, x^\gamma\circ F_3,y_{m+1},\dots,y_n\rangle_{\gamma\in N(2)}\subseteq\tdf(F)_1$. Again we actually have an equality. Indeed the only possible new element in $\tdf(F)_1$ could come from a linear combination of $s!y_1^2+2x_1^{s-2}\circ F_3, x_2\circ F_3,\dots,x_m\circ F_3$ when $s=4$. Again the condition $x_1^2\circ F_3$ guaratees that this cannot occur. It follows that
$$
\tdf(F_{\ge 3})_1=\langle s!y_1+2x_1^{s-1}\circ F_3, x^\gamma\circ F_3\rangle_{\gamma\in N(2)}.
$$
Thus the Hilbert function of $B:=S[m]/(F_{\ge 3})$ is $H_B=(1,a,m,1,\dots,1)$. Thanks to Formula \eqref{GorDec} we know that $a\ge m$. Since $F_{\ge 3}\in P[m]$ we necessarily have 
$$
\langle s!y_1+2x_1^{s-1}\circ F_3, x^\gamma\circ F_3\rangle_{\gamma\in N(2)}=\langle y_1,\dots,y_m\rangle,
$$
whence $H_A(1)=n$.

Now we prove the \lq\lq only if\rq\rq\ part. Thanks to Lemma \ref{lStruct} we know that
$$
A\cong S[n]/\Ann(F),
$$
where $F:=y_1^s+F_3+F_2+\sum_{j=m+1}^ny_j^2$, $F_i\in P[m]_i$, $x_1^2\circ F_3=x_1^2\circ F_2=0$ and $x_2\circ F_{3},\dots, x_{m}\circ F_{3}$ are linearly independent. We first examine the case $n=m$: the changes in the case $n>m$ will be listed at the end of the proof.

Imitating the proof of \cite{E--R1}, Theorem 3.3, we look for a suitable automorphism $\varphi\in\Aut(S[n]_{\le s})$ defined as
$$
\varphi(x_j)=x_j+\sum_{\gamma\in N(2)}b_{\gamma,j}x^\gamma.
$$
whose dual morphism transforms $\Ann(F_{simple})^\perp$ to $\Ann(F)^\perp$. The matrix $M(\varphi)$ with respect to $\mathcal Y$ is
\begin{equation}
\label{matrix}
B:=\left(\begin{array}{cccccc}
1&0&0&0&\dots&0\\
0&I_n&0&0&\dots&0\\
0&B(2,1)&I_{n+1\choose2}&0&\dots&0\\
0&0&B(3,2)&I_{n+2\choose3}&\dots&0\\
0&0&B(4,2)&B(4,3)&\dots&0\\
0&0&0&B(5,3)&\dots&0\\
\vdots&\vdots&\vdots&\vdots&\ddots&\vdots\\
0&0&0&*&\dots&I_{n+e-1\choose e}
\end{array}
\right)
\end{equation}
where $I_h$ is the identity matrix of order $h$, $B(i,j)$ are matrices of order  ${n+i-1\choose i}\times {n+j-1\choose j}$ whose entries are forms of degree $i-j$ in the  $b_{\gamma,j}$'s, $0$ is a zero matrix of suitable dimensions.

Set
$$
F_3=\sum_{\alpha\in N(3)}\frac{s!}{\alpha!}u_\alpha y^\alpha,\qquad F_2=\sum_{\beta\in N(2)}\frac{s!}{\beta!}v_\beta y^\beta.
$$
We have $\sum_{i=1}^nu_{2e_1+e_i}y_i=x_1^2\circ F_3=0$, $v_{2e_1}=x_1^2\circ F_2=0$.

We denote by $\Delta$ the $n\times{n+1\choose2}$ matrix whose $t^{th}$ row is the vector of the coordinates of $x_t\circ F_3$ with respect to the basis $\mathcal Y$. We notice that the condition $x_1^2\circ F_3=0$ implies that the first column of $\Delta$ is zero.

Let $0_t$ be the $0$ vector of $k^{\oplus t}$ and let $e$ be the first vector of the canonical basis of $k^{n+s-1\choose s}$ (thus $e$ is the vector of the components of $y_1^s$ with respect to $\mathcal Y$). The component of $F$ and $F_{simple}$ with respect to the basis $\mathcal Y$ of $P[n]_{\le s}$ are respectively
\begin{gather*}
[F]_{\mathcal Y}=(0,0_n,s!v_\beta,s!u_\alpha,0_{n+3\choose 4},\dots,0_{n+s-2\choose s-1},e)_{{\beta\in \bN^m,\vert\beta\vert=2}\atop{\alpha\in \bN^m,\vert\alpha\vert=3}},\\
[F_{simple}]_{\mathcal Y}=(0,0_n,0_{n+1\choose2},s!u_\alpha,0_{n+3\choose 4},\dots,0_{n+s-2\choose s-1},e)_{{\beta\in \bN^m,\vert\beta\vert=2}\atop{\alpha\in \bN^m,\vert\alpha\vert=3}}.
\end{gather*}

By duality we have to look for a $\varphi$ such that
\begin{align}
\label{FormulaV}
(u_\alpha)_{{\alpha\in \bN^m,\vert\alpha\vert=3}} B(3,2)=(v_\beta)_{\beta\in \bN^m,\vert\beta\vert=2}.
\end{align}
Notice that the columns of $B(3,2)$ are exactly the coefficients of the forms of degree $3$ in the products
$$
(x_j+\sum_{\gamma\in \bN^m,\vert\gamma\vert=2}b_{\gamma,j}x^\gamma)(x_h+\sum_{\delta\in \bN^m,\vert\delta\vert=2}b_{\delta,h}x^\delta)
$$
for $j\le h=1,\dots,n$. It follows that the entry on the $\alpha^{th}$ row and on the $(j,h)^{th}$ column is
$$
B(3,2)_{\alpha,(j,h)}=\left\lbrace\begin{array}{ll}
b_{\delta+e_j,h}+b_{\gamma+e_h,j}\qquad&\text{if $\alpha\ge e_j+e_h$,}\\
b_{\delta+e_j,h}\qquad&\text{if $\alpha\ge e_j$,\ $\alpha\not\ge e_h$,}\\
0\qquad&\text{otherwise.}
 \end{array}\right.
$$

Thus the entries of the product $(u_\alpha)_{\alpha\in N(3)}
B(3,2)$ are bihomogeneous forms in the $u_\alpha$'s and
$b_{\gamma,j}$'s. Hence there is a suitable $n{n+1\choose2}\times
{n+1\choose 2}$ matrix $U$ whose coefficients depend on the
$u_\alpha$'s and such that
\begin{align}
\label{FormulaU}
(u_\alpha)_{\alpha\in \bN^m,\vert\alpha\vert=3} B(3,2)=(b_{\gamma,j})_{\gamma\in \bN^m,\vert\gamma\vert=2,j=1,\dots,n}{}^tU
\end{align}
(the $b_{\gamma,j}$'s are ordered first with respect to $\gamma$ and then with respect to $j$).
Thus we obtain from equalities \eqref{FormulaV} and \eqref{FormulaU} the system of linear equations
\begin{align}
\label{system1}
U\ {}^t(b_{\gamma,j})_{\gamma\in \bN^m,\vert\gamma\vert=2,j=1,\dots,n}={}^t(v_\beta)_{\beta\in \bN^m,\vert\beta\vert=2}
\end{align}
in the variables $b_{\gamma,j}$'s. We recall that in  \cite{E--R1}, in the proof of Theorem 3.3, it is proved that $U$ is a lower triangular block matrix
$$
U=\left(\begin{array}{cccccc}
U(1)&*&*&\dots&*&*\\
0&U(2)&*&\dots&*&*\\
0&0&U(3)&\dots&*&*\\
\vdots&\vdots&\vdots&\ddots&\vdots&\vdots\\
0&0&0&\dots&U(n-1)&*\\
0&0&0&\dots&0&U(n)
\end{array}
\right)
$$
where $U(h)$ is a $(n-h+1)\times{n+1\choose 2}$ matrix whose first row is twice the $h^{th}$ row of the matrix $\Delta$ (previously defined as the matrix of the partial derivatives of $F_3$) and the $t^{th}$ row is exactly the $(h+t-1)^{th}$ row of $\Delta$, $t=2,\dots,n-h+1$.

Due to the independence of the derivatives  $x_2\circ F_{3},\dots,
x_{m}\circ F_{3}$ it thus follows that the rank of the submatrix
obtained by erasing the first row of $U$ is maximal. Moreover the
constant term of the first equation is $v_{2e_1}$ which is zero,
because $x_1^2\circ F_2=0$. It follows the existence of a solution
the system \eqref{system1} with $b_{\gamma,1}=0$,
$\gamma\in\bN^m$, $\vert\gamma\vert=2$.

In order to extend the above proof also to the case $n>m$ it suffices to change the ordering on $\mathcal X$ and, consequently, on $\mathcal Y$. In this case we fix an order on $\mathcal X$ by taking first all the monomials in $x_1,\dots,x_m$ (ordered first by degree and then lexicographically), and then all the remaining monomials in any order. Thus
$$
M(\varphi)=\left(\begin{array}{cc}
B&0\\
*&B'
\end{array}
\right),
$$
where $B$ is as in (\ref{matrix}) and $B'$ is a suitable matrix whose entries depend on the $b_{\gamma,j}$'s such that $\gamma\ge e_j$ for some $j\ge m+1$. We can thus repeat the above arguments obtaining a system of the form
\begin{align}
\label{system2}
\left(\begin{array}{cc}
U&*\\
0&U'
\end{array}
\right)
\ {}^t(b_{\gamma,j})_{\gamma\in \bN^m,\vert\gamma\vert=2,j=1,\dots,n}={}^t(v_\beta)_{\beta\in \bN^m,\vert\beta\vert=2}
\end{align}
where $U$ is as above and the entries of $U'$ depend on $u_\alpha$ such that $\alpha\ge e_j$ for some $j\ge m+1$. On the one hand, thanks to Lemma \ref{lStruct} we know that such $u_\alpha$ are all zero, i.e. $U'$ is the zero matrix. On the other hand, again by Lemma \ref{lStruct} we know that $v_\beta=0$, for $\beta\ge e_j$ when $j\ge m+1$. We deduce that such a system has again solutions, and, in particular, one of its solutions satisfies $b_{\gamma,j}=0$, $\gamma\in \bN^m$, $\vert\gamma\vert=2$, $j=1,m+1,\dots,n$.
\end{proof}

\section{Obstructedness of a class of algebras}
\label{sObstructed}
In this section we make use of the above structure theorem in
order to deal with the obstructedness of the points in
$\Hilb_{11}^G(\p N)$ corresponding to schemes $X\cong\spec(A)$ where
$A$ is a local, Artin, Gorenstein $k$-algebra with Hilbert
function $H_A=(1,4,4,1,1)$. Thus $A\cong S[4]/J$, where $J$
contains
\begin{equation}
\label{highDegree}
x^\beta,\ x^\alpha,\qquad \beta,\alpha\in \bN^4,\ \vert\beta\vert=4,\ \beta\ne4e_1,\ \vert\alpha\vert=5.
\end{equation}
It follows that $S[4]_+^5\subseteq J$, hence there is a natural isomorphism
$$
\frac{S[4]}{J}\cong\frac{ k[x_1,x_2,x_3,x_4]}{J\cap [x_1,x_2,x_3,x_4]},
$$
inducing a natural epimorphism $k[x_1,x_2,x_3,x_4]\twoheadrightarrow A\cong S[4]/J$, i.e. an embedding $X\subseteq\a4\subseteq\p4$.

In \cite{C--N3} we proved the irreducibility of $\Hilb_{11}^G(\p n)$ studying the locus of singular $X$ such that $X\cong\spec(A)$, where $A$ is not local with $H_A=(1,4,4,1,1)$. Thus a point $X\in \Hilb_{11}(\p n)$ is singular (i.e. the corresponding space is obstructed) if the dimension of the tangent space at $X$ to $\Hilb_{11}(\p n)$ is greater than $\dim(\Hilb_{11}^G(\p n))$.

Obstructedness depends only on the intrinsic structure of $X$ (see \cite{C--N1} and the references therein),
hence only on $A$, we can restrict our attention to the aforementioned embedding in $\p4$ and we simply speak about the obstructedness of the algebra $A$.

Recall that the
tangent space to $\Hilb_{11}^G(\p 4)$ at $X$ is
canonically identified with $H^0\big(X,{\mathcal
N}\big)$ of the global sections of the normal sheaf $\mathcal N$
of $X:=\spec(S[4]/J)\subseteq\a4$. Thus $X$ is unobstructed if and
only if
$$
N_{S[4]/J}:=h^0\big(X,{\mathcal N}\big)=\dim\big(\Hilb_{11}^{G}(\p 4)\big)=44.
$$
In \cite{C--N1} we pointed out that $N_{S[4]/J}=\dim_k(S[4]/J^2)-11$.

Thanks to Theorem \ref{tStruct}, we can assume $J=\Ann(F)$, where
$F=y_1^4+F_3$, $F_3\in P[4]$, $x_1^2\circ F_3=0$ and $x_2\circ
F_3$, $x_3\circ F_3$, $x_4\circ F_3$ are linearly independent.
So,
$$
F=y_1^4+y_1Q+H,
$$
where $Q\in k[y_2,y_3,y_4]_2$ and $H\in k[y_2,y_3,y_4]_3$. Up to a
suitable linear transformation of the variables $y_2,y_3,y_4$,
then $H$ is either $0$ or it can be put in one of the following forms
\begin{equation}
\label{List}
{\begin{gathered}
y_2^3+y_3^3+y_4^3+ty_2y_3y_4,\quad y_2^3+y_3^3+y_2y_3y_4,\quad y_2^3+y_2y_3y_4,\quad y_2y_3y_4,\\
y_2^3 +y_3^2y_4,\quad y_2^2y_3+y_3^2y_4,\quad  y_3^2y_4-y_3y_4^2,\quad  y_3y_4^2,\quad  y_4^3.
\end{gathered}}
\end{equation}
We will examine separately the above cases in what follows computing $\dim_k(S[4]/J^2)$.

Before starting with the description in the different cases we spent a few words on the methods used to perform the computations.

Thanks to Relations \ref{highDegree} we know that $S[4]_+^{10}\subseteq J^2$, thus
$$
\frac{S[4]}{J^2}\cong\frac{ k[x_1,x_2,x_3,x_4]}{J^2\cap [x_1,x_2,x_3,x_4]},
$$
too. In particular when we perform computations, we can always
work in the polynomial ring $k[x_1,x_2,x_3,x_4]$ instead of
$S[4]$. For this reason we can make use of the Computer Algebra
Software Singular \cite{sing} for all the computations in $S[4]$.

Moreover, $ J $ is never homogeneous. We
computed $\init(J) $ and $\init(J^2) $ with respect to
the product term order for which
\rostere
\item $x_4 > x_3 > x_2 > x_1;$
\item the graded reverse lexicographic order on $ x_4, x_3, x_2;$
\item the lexicographic order on $ x_1.$
\endrostere
Hence, for comparing $
x_4^{a_4} x_3^{a_3} x_2^{a_2} x_1^{a_1} $ and $ x_4^{b_4}
x_3^{b_3} x_2^{b_2} x_1^{b_1},$ we first compare $ x_4^{a_4}
x_3^{a_3} x_2^{a_2} $ and $ x_4^{b_4} x_3^{b_3} x_2^{b_2} $ with
respect to degrevlex, and, if they are equal, we compare $
x_1^{a_1} $ and $ x_1^{b_1} $ with respect to lex. For such a
choice, the Hilbert function of $ S[4]/\init(J) $ is $
(1,4,4,1,1),$ while the Hilbert function of $ S[4]/\init(J^2)
$ is the one indicated above.

\begin{proposition}
\label{H=0}
Let $A:=S[4]/\Ann(y_1^4+y_1Q)$ with $Q\in k[y_2,y_3,y_4]_2$ and $H_A=(1,4,4,1,1)$. Then $N_A=49$, hence $A$ is obstructed.
\end{proposition}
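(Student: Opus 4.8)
We outline the argument in three steps; the only substantive point is the last one.

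\emph{Step 1 (reduction to a normal form).} Here $H=0$, so $F=y_1^4+y_1Q$ with $F_3=y_1Q$ and $Q\in k[y_2,y_3,y_4]_2$. The condition $x_1^2\circ F_3=0$ coming from Theorem \ref{tStruct} holds automatically (since $y_1Q$ is linear in $y_1$), while $x_2\circ F_3,x_3\circ F_3,x_4\circ F_3$ are linearly independent exactly when $x_2\circ Q,x_3\circ Q,x_4\circ Q$ are, i.e. exactly when $Q$ is a nondegenerate quadratic form; by the ``if'' part of Theorem \ref{tStruct} this is in turn equivalent to $H_A=(1,4,4,1,1)$. As $k$ is algebraically closed of characteristic $0$, all nondegenerate ternary quadratic forms are equivalent under an invertible linear substitution, so a linear change of $y_2,y_3,y_4$ (fixing $y_1$ and preserving the shape $y_1^4+y_1Q$) reduces us to
$$
F=y_1^4+y_1(y_2^2+y_3^2+y_4^2).
$$
Such a substitution induces an isomorphism of $A$, and $N_A$ (hence obstructedness) depends only on the isomorphism class of $A$; thus it suffices to treat this single $F$, and the value of $N_A$ obtained is automatically the same for every admissible $Q$.

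\emph{Step 2 (the ideal $J$).} A direct inspection of partial derivatives shows that the nine elements $x_2x_3,\ x_2x_4,\ x_3x_4,\ x_2^2-x_3^2,\ x_3^2-x_4^2,\ x_1^2x_2,\ x_1^2x_3,\ x_1^2x_4$ and $x_1^3-12x_2^2$ all annihilate $F$. Modulo the ideal $J'$ they generate, $S[4]$ is spanned over $k$ by the eleven elements $1,x_1,x_2,x_3,x_4,x_1^2,x_1x_2,x_1x_3,x_1x_4,x_2^2,x_1x_2^2$, so $\dim_k(S[4]/J')\le 11=\dim_k A$; since $J'\subseteq J=\Ann(F)$, this forces $J'=J$.

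\emph{Step 3 (colength of $J^2$ and conclusion).} Hence $J^2$ is generated by the $45$ pairwise products of the nine generators above. Because $S[4]_+^5\subseteq J$ we have $S[4]_+^{10}\subseteq J^2$, so $S[4]/J^2$ is finite-dimensional and a Gröbner-basis computation is effective: with the term order introduced above one checks first that $\init(J)=(x_2,x_3,x_4)^2+(x_1^2x_2,x_1^2x_3,x_1^2x_4,x_1^5)$ (of colength $11$), and then, after squaring and reducing, that
$$
\dim_k\bigl(S[4]/J^2\bigr)=60.
$$
By the identity $N_{S[4]/J}=\dim_k(S[4]/J^2)-11$ recalled above this gives $N_A=49$, and since $49>44=\dim\bigl(\Hilb_{11}^G(\p 4)\bigr)$ the scheme $X=\spec(A)$ is obstructed. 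Steps 1 and 2 are routine; the hard part is the last computation, the verification that $\dim_k(S[4]/J^2)=60$: as $J^2$ is not homogeneous one must either determine $\init(J^2)$ for the chosen term order and count standard monomials by hand, or delegate the computation to a computer algebra system such as Singular, exactly as elsewhere in this section.
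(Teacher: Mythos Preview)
Your proof is correct and follows essentially the same route as the paper's: reduce $Q$ to the standard nondegenerate form $y_2^2+y_3^2+y_4^2$ by a linear change of $y_2,y_3,y_4$, write down the nine generators of $J$, and then compute $\dim_k(S[4]/J^2)=60$ (equivalently, the paper records $H_{S[4]/J^2}=(1,4,10,20,20,4,1)$) via a Gr\"obner--basis/Singular computation to conclude $N_A=49>44$. Your Steps~1--2 spell out a few justifications (nondegeneracy of $Q$, the explicit check that the nine generators cut out an $11$--dimensional quotient, the description of $\init(J)$) that the paper leaves implicit, but there is no substantive difference in strategy.
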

\begin{proof}
We know that $x_i\circ F_3$, $i=2,3,4$, are linearly independent because of Theorem \ref{tStruct}. It follows that, up to a linear change of the variables
$y_2,y_3,y_4$, we can assume $Q=y_2^2+y_3^2+y_4^2$. In this case $J=\Ann(F)$ is generated by
$$
x_4^2-x_2^2,\quad x_3^2-x_2^2,\quad x_4x_3,\quad x_4x_2,\quad x_3x_2,\quad 12x_4^2-x_1^3,\quad
x_2x_1^2,\quad x_3x_1^2,\quad x_4x_1^2.
$$
Using the software Singular we obtain $H_{S[4]/J^2}=(1,4,10,20,20,4,1)$, thus $N_{A}=49>44$.
\end{proof}

Now we turn our attention to the case $H\ne0$. It follows that $H$ is one of the polynomials indicated in the list \eqref{List}. We outline the strategy for the computations.

Once $H\ne 0$ is fixed, for each $b:=(b_0,\dots,b_5)\in\a6$, we consider the quadratic form
$$
Q_b:=b_0 y_2^2 + 2 b_1 y_2 y_3 + b_2 y_3^2 + 2 b_3 y_2 y_4 + 2
b_4 y_3 y_4 + b_5 y_4^2
$$
The associated symmetric matrix is the matrix $M_b$ defined in the introduction.

We define $F^{H,b}:=y_1^4+y_1Q_b+H$ and $A^{H,b}:=S[4]/\Ann(F^{H,b})$. Hence $ \Ann(F^{H,b})$ depends on $
x_1, \dots, x_4 $ and $ b_0,\dots,b_5$. Thanks to Theorem \ref{tStruct} and \cite{Em}, Corollary at p. 415, we have a flat family whose base is the open non--empty subset
$$
B_H:=\{\ b\in\a6\ \vert\ \text{$x_i\circ (y_1Q_b+H)$, $i=2,3,4$, are linearly independent}\ \}.
$$
Trivially we have to compute the Hilbert
function of $ \Ann(F^{H,b})^2 $ as a function of $b$.

The following result helps us to simplify the computations in several case.
\begin{lemma}
\label{0-smooth}
Let $H\in k[y_2,y_3,y_4]_3$ be fixed. If $0\in B_H$ and $A^{H,0}$ is unobstructed, then $B_H=\a6$ and for every $b\in\a6$ we have that $A^{H,b}$ is unobstructed.
\end{lemma}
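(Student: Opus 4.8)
The plan is to exhibit a one--parameter group of diagonal substitutions which acts on the parameter space $\a6$ by scaling, and then to propagate the hypotheses at $b=0$ to all of $\a6$ by an elementary ``closed cone'' argument. For $\lambda\in k^*$ let $\phi_\lambda$ denote the substitution $y_1\mapsto\lambda^{-3}y_1$, $y_j\mapsto\lambda^{-4}y_j$ ($j=2,3,4$) of $P[4]$, with dual substitution $x_1\mapsto\lambda^{3}x_1$, $x_j\mapsto\lambda^{4}x_j$ ($j=2,3,4$) of $S[4]$. Since $Q_b$ is a quadratic form in $y_2,y_3,y_4$ depending linearly on $b$ and $H$ is a cubic form in $y_2,y_3,y_4$, one checks immediately that
$$
\phi_\lambda\big(F^{H,b}\big)=\lambda^{-12}y_1^4+\lambda^{-11}y_1Q_b+\lambda^{-12}H=\lambda^{-12}F^{H,\lambda b}.
$$
Because $\Ann(cG)=\Ann(G)$ for every $c\in k^*$ and because $\phi_\lambda$ carries $\Ann(G)$ onto $\Ann(\phi_\lambda(G))$ through its dual automorphism of $S[4]$ (the device already used in the proofs of Proposition~\ref{pChange} and Theorem~\ref{tStruct}), one obtains an isomorphism of $k$--algebras
$$
A^{H,\lambda b}\ \cong\ A^{H,b}\qquad\text{for every }\lambda\in k^*\text{ and }b\in\a6.
$$

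Granting this, I would first deduce that $B_H=\a6$. By construction $B_H$ is open in $\a6$; it is non--empty, since $0\in B_H$ by hypothesis; and it is stable under the scaling $b\mapsto\lambda b$, because $A^{H,\lambda b}\cong A^{H,b}$ have the same Hilbert function and, by Theorem~\ref{tStruct}, $b\in B_H$ if and only if $H_{A^{H,b}}=(1,4,4,1,1)$. Hence $Z:=\a6\setminus B_H$ is a Zariski closed subset of $\a6$, invariant under $k^*$--scaling and not containing the origin. But a non--empty closed cone of $\a6$ necessarily contains $0$: if $0\ne p\in Z$ then $\{\lambda p\mid\lambda\in k^*\}\subseteq Z$, and the Zariski closure of this punctured line is the whole line $k\cdot p$, so $0\in Z$. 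Therefore $Z=\emptyset$, that is, $B_H=\a6$.

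Once $B_H=\a6$, the family $\{A^{H,b}\}_{b\in\a6}$, which is flat by \cite{Em}, Corollary at p.~415, defines a morphism $g\colon\a6\to\Hilb_{11}^G(\p 4)$ all of whose fibres are Gorenstein schemes of length $11$. Since $\Hilb_{11}^G(\p 4)$ is irreducible of dimension $44$ by \cite{C--N3}, we have $N_{A^{H,b}}=\dim T_{g(b)}\Hilb_{11}^G(\p 4)\ge 44$ for every $b$, with equality exactly when $A^{H,b}$ is unobstructed; moreover the dimension of the Zariski tangent space is upper semicontinuous on a scheme, so $b\mapsto N_{A^{H,b}}$ is upper semicontinuous on $\a6$ and
$$
U:=\{\,b\in\a6\mid A^{H,b}\text{ is unobstructed}\,\}=\{\,b\in\a6\mid N_{A^{H,b}}\le 44\,\}
$$
is open. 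It contains $0$ by hypothesis, and it is $k^*$--stable because obstructedness depends only on the algebra and $A^{H,\lambda b}\cong A^{H,b}$. Applying the closed cone observation once more to $\a6\setminus U$ gives $U=\a6$, which is exactly the assertion.

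The main obstacle, and essentially the only computation, is the first display: one must verify that the diagonal substitution $\phi_\lambda$ sends $F^{H,b}$ to a scalar multiple of $F^{H,\lambda b}$, which is precisely what forces the exponents $-3$ and $-4$ (the pure power $y_1^4$ and the cubic $H$ must absorb the same power of $\lambda$, and then the weight of $y_1Q_b$ is determined). Everything after that is soft: the scaling symmetry $A^{H,\lambda b}\cong A^{H,b}$, combined with the elementary fact that a Zariski closed cone missing the origin is empty, applied first to $B_H$ and then to the unobstructed locus, transports the local information at $b=0$ to all of $\a6$.
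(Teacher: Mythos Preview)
Your proof is correct and uses the same essential ingredient as the paper: the diagonal scaling $(y_1,y_2,y_3,y_4)\mapsto(\lambda^{-3}y_1,\lambda^{-4}y_2,\lambda^{-4}y_3,\lambda^{-4}y_4)$ that yields $A^{H,\lambda b}\cong A^{H,b}$, followed by the observation that unobstructedness is an open condition which can therefore be propagated from $0$ along every $k^*$--orbit. The paper phrases this as a one--parameter flat family over $\a1$ (the line through $0$ and a fixed $b$), while you phrase it as a closed--cone argument on all of $\a6$; the content is the same.

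The one genuine, if minor, difference concerns the equality $B_H=\a6$. You deduce it from the cone argument applied to the closed set $\a6\setminus B_H$. The paper does something more direct and more elementary: since $0\in B_H$ exactly means that $x_i\circ H$ ($i=2,3,4$) are linearly independent in $k[y_2,y_3,y_4]_2$, and since $x_i\circ(y_1Q_b+H)=y_1(x_i\circ Q_b)+x_i\circ H$ with the first summand divisible by $y_1$ and the second in $k[y_2,y_3,y_4]$, any linear dependence among the $x_i\circ(y_1Q_b+H)$ forces one among the $x_i\circ H$. Thus $B_H=\a6$ follows immediately from $0\in B_H$, without invoking the scaling isomorphism, semicontinuity, or Theorem~\ref{tStruct}. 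Your route is valid but longer; the paper's route shows that this first step is really just elementary linear algebra.
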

\begin{proof}
Fix $H$ and $b$. Theorem \ref{tStruct} implies that $H_{A^{H,0}}=(1,4,4,1,1)$ because $0\in B_H$. In particular $x_i\circ F^{H,0}_3=x_i\circ H$, $i=2,3,4$ are linearly independent, thus the same is true for $x_i\circ F^{H,tb}_3$, $i=2,3,4$ without restrictions on $t\in k$ and $b\in\a6$, because $y_1$ does not appear in $H$. In particular $B_H=\a6$.

Again Theorem \ref{tStruct} implies that the Hilbert function of $A^{H,tb}$ is $(1,4,4,1,1)$, thus we have a flat family (\cite{Em}, Corollary at p. 415) of deformations of $A^{H,0}$ with base $\a1$. If $t\ne0$, the automorphism of $P[4]$ defined by $(y_1,y_2,y_3,y_4)\mapsto(t^3y_1,t^4y_2,t^4y_3,t^4y_4)$ shows that all the other deformations are isomorphic to $A^{H,b}$. Since $A^{H,0}$ is unobstructed, it follows that the general one, i.e. $A^{H,b}$, is unobstructed too.
\end{proof}

As immediate application of the above Lemma we obtain the following general result.

\begin{proposition}
\label{general}
Let $H$ be either $y_2^3+y_3^3+y_4^3+ty_2y_3y_4$ with $ t(t^3-216)\ne0$, or $y_2^3+y_3^3+y_2y_3y_4$, $y_2^3+y_2y_3y_4$, or $y_2y_3y_4$. Then $B_H=\a6$ and $A^{H,b}$ is unobstructed.
\end{proposition}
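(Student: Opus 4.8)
The plan is to reduce each of the four cases to a single computation by invoking Lemma \ref{0-smooth}. By that lemma it suffices, for each $H$ in the list, to verify two things: first, that $0\in B_H$, i.e. that $x_2\circ H$, $x_3\circ H$, $x_4\circ H$ are linearly independent; second, that $A^{H,0}=S[4]/\Ann(y_1^4+H)$ is unobstructed. Once both hold, Lemma \ref{0-smooth} delivers at once that $B_H=\a6$ and that $A^{H,b}$ is unobstructed for every $b\in\a6$, which is exactly the assertion of the proposition.

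The condition $0\in B_H$ is immediate. Writing the three partial derivatives explicitly --- for instance $x_2\circ H=3y_2^2+ty_3y_4$, $x_3\circ H=3y_3^2+ty_2y_4$, $x_4\circ H=3y_4^2+ty_2y_3$ when $H=y_2^3+y_3^3+y_4^3+ty_2y_3y_4$, and the analogous, simpler, expressions in the other three cases --- one reads off that any linear relation among the three forms forces all coefficients to vanish (compare the coefficients of $y_2^2,y_3^2,y_4^2$, respectively of the surviving monomials of $H$). Hence $0\in B_H$ in all four cases, and by Lemma \ref{0-smooth} it only remains to prove that $A^{H,0}$ is unobstructed.

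For the unobstructedness of $A^{H,0}$ I would argue exactly as in the proof of Proposition \ref{H=0}. By Theorem \ref{tStruct} the Hilbert function of $A^{H,0}$ is $(1,4,4,1,1)$, so $S[4]_+^{5}\subseteq J:=\Ann(y_1^4+H)$ and $S[4]_+^{10}\subseteq J^2$; this lets one work inside $k[x_1,\dots,x_4]$ and compute, with the Computer Algebra Software Singular \cite{sing} and the term order fixed above, a set of generators of $J$, the initial ideal $\init(J^2)$, and its Hilbert function. The claim to verify is then that $\dim_k(S[4]/J^2)=55$, equivalently $N_{A^{H,0}}=\dim_k(S[4]/J^2)-11=44=\dim\big(\Hilb_{11}^G(\p4)\big)$. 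For the three cases $H=y_2^3+y_3^3+y_2y_3y_4$, $H=y_2^3+y_2y_3y_4$ and $H=y_2y_3y_4$ this is a single finite computation. For the one--parameter family $H=y_2^3+y_3^3+y_4^3+ty_2y_3y_4$ one keeps $t$ symbolic (computing a Gröbner basis of $J^2$ over $k(t)$, or specializing at several generic values of $t$): the leading terms of such a basis, hence the value $\dim_k(S[4]/J^2)=55$, stay constant as long as $t(t^3-216)\neq0$. At the excluded values the cubic $H$ becomes projectively equivalent to the Fermat cubic $y_2^3+y_3^3+y_4^3$ ($j=0$ exactly when $t=0$ or $t^3=216$), and then $A^{H,0}$ is in fact obstructed, consistently with case (1) of Theorem B. As an alternative check on the generic value one may use the upper semicontinuity of $h^0(X,\mathcal N)$ in the flat family of the $A^{H_t,0}$ together with the irreducibility of $\Hilb_{11}^G(\p4)$.

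The only genuine difficulty lies in this last step: one must be sure that no leading term of the Gröbner basis of $J^2$ degenerates for $t$ outside the locus $t(t^3-216)=0$, so that the Hilbert function obtained from the symbolic computation is the correct one for every admissible $t$. Everything else is routine; Lemma \ref{0-smooth} carries the entire passage from the distinguished member $A^{H,0}$ to the whole family $\{A^{H,b}\}_{b\in\a6}$.
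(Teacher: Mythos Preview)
Your proposal is correct and follows essentially the same approach as the paper: both reduce to the case $b=0$ via Lemma~\ref{0-smooth}, then use a Singular computation of $\init(J^2)$ (keeping $t$ symbolic in the Hesse--pencil case and tracking where the leading terms degenerate) to verify $N_{A^{H,0}}=44$. Your write-up is in fact slightly more explicit than the paper's about the intermediate steps (checking $0\in B_H$, the target value $\dim_k(S[4]/J^2)=55$, and the Fermat interpretation of $t(t^3-216)=0$), but the strategy is identical.
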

\begin{proof}
It is immediate to check that $B_{H}=\a6$. The above cases are handled in a similar way, thus we only deal with the first one. In this case we will write $H_t$ instead of $H$. At a first stage, we look for $ t$ such that $0\in\cU_t$, where $\cU_t\subseteq\a6$ is the open subset of $b$ such that $A^{H_t,b}$ is unobstructed. For such values of $ t,$ we have $B_{H_t}= \a6$ (see Theorem \ref{0-smooth}).

The ideal $J_t:=\Ann(F^{H^t,0})$ is non--minimally generated by
\begin{gather*}
x_1 x_2,\quad
x_1 x_3,\quad
x_1 x_4,\quad
t x_2^2-6 x_3 x_4,\quad
t x_3^2-6 x_2 x_4,\quad
t x_4^2-6 x_2 x_3,\quad
x_1^2 x_2,\quad
x_1 x_2^2,\\
x_1^2 x_3,\quad
x_1 x_2 x_3,\quad
x_2^2 x_3,\quad
x_1 x_3^2,\quad
x_2 x_3^2,\quad
x_1^2 x_4,\quad
x_1 x_2 x_4,\quad
x_2^2 x_4,\quad
x_1 x_3 x_4,\quad
x_3^2 x_4,\\
x_1 x_4^2,\quad
x_2 x_4^2,\quad
x_3 x_4^2,\quad
4 x_2^3-x_1^4,\quad
4 x_3^3-x_1^4,\quad
24 x_2 x_3 x_4-t x_1^4,\quad
4 x_4^3-x_1^4;
\end{gather*}
The unique relation on $ t $ for which $\init(J_t^2)$ changes is $ t(t^3-216) $. Thus $0\not\in\cU_t$ if, and only if, $  t(t^3-216) = 0$.
\end{proof}

\begin{remark}
The form $y_2^3+y_3^3+y_4^3+ty_2y_3y_4$ is the sum of the cubes of three linearly independent linear forms (i.e. it represent a Fermat cubic in the projective plane) if, and only if, $  t(t^3-216) = 0$.
\end{remark}

In the remaining cases the above argument does not work. Anyhow, as it will be evident from the computations below, the generators of $ \Ann(F^{H,b})$, hence of $ \Ann(F^{H,b})^2$, in $ k[b]\otimes_k S[4],$ have homogeneous coefficients in $k[b]$, and this
property still holds when we compute Gr\"obner bases either of $
\Ann(F^{H,b})$, or of $ \Ann(F^{H,b})^2$. Hence
the base for our family is the open non--empty subset of $B_H\subseteq\a6$ where the three derivatives $x_i\circ (y_1Q_b+H)$, $i=2,3,4$, are linearly independent, but all the coefficients describe cones
through the origin.

Now, we focus on $\Ann(F^{H,b})^2$. The Hilbert function
of $ S[4]/\Ann(F^{H,b})^2$ will be constant on an open subset of $
B_H.$ If the Hilbert function of $ S[4]/\Ann(F^{H,b})^2$ changes for
suitable $ b,$ then $\init(\Ann(F^{H,b})^2)$ must change as well. Hence, we compute a Gr\"obner basis of
$\Ann(F^{H,b})^2$ with respect to the term order described above, and we compute, for each monomial of $\init(\Ann(F^{H,b})^2),$ the homogeneous ideals in $ k[b]
$ spanned by its coefficients. We compute the prime ideals $r_*$
associated to such ideals, giving us a set of
level $ 1 $ conditions that can force the Hilbert function of $
\Ann(F^{H,b})^2 $ to change. Of course, we can restrict to the associated
prime ideals because we study set--theoretically the family $S[4]/\Ann(F^{H,b})\to B_H$. By computing the Gr\"obner basis of $r_*+ \Ann(F^{H,b})^2,$
we get a new initial ideal that we study exactly as before,
obtaining a new set of level $ 2 $ prime ideals. Going on with
this strategy, we construct a tree that we analyze leaf by leaf
from the point of view of the Hilbert function.

Before listing the results, we make some remarks on the way we
perform the computations.

The Gr\"obner bases computations can be performed in $
k[x_4, \dots, x_1, b_0, \dots, b_5],$ up to choosing a
product term ordering, with $ 3 $ blocks of variables: $ x_4 > x_3
> x_2, x_1, b_0 > \dots > b_5,$ and degrevlex orders the monomials
in the first and last block, while lex orders the monomials in the
second block.

To compute the Hilbert function of an ideal over the general
element of the variety defined by a chosen prime ideal, we proceed
in the following way: we compute the initial ideal of the ideal
generated by the prime ideal and $\Ann(F^{H,b})^2,$ we forget the monomials
contained in $ k[b],$ and then we reduce the resulting monomial
ideal by setting $b_i=1$ for each $i$. As last step, we compute
the Hilbert function of the ideal we get.

To compute the ideal spanned by the coefficients of a particular
initial monomial $ M \in k[x_1, \dots, x_4],$ we select
the polynomials in the Gr\"obner basis having initial monomial $ M
M' $ with $ M' \in k[b_0, \dots, b_5],$ then we compute
the remainder of every such polynomial modulo $ M,$ and the
difference $ D $ between the polynomial and its remainder. $ D $
has $ M $ as factor, and we call $ d $ the quotient. The computed
$d$'s are the generators of the ideal spanned by the
coefficients. The prime ideals associated to such an ideal are
computed by using the package Primdec in Singular, when it
produces the result (this happened always but a few cases in which
we had to make the computation by hand with ad hoc techniques,
because the size of the ideal was too big for Singular to compute
the result in a reasonable time).

In next proposition, we deal with the cases $H\ne0$ not covered by Proposition \ref{general}.

\begin{proposition}
\label{general}
Let $H=y_2^3+y_3^3+y_4^3$. Then $B_H=\a6$ and $A^{H,b}$ is obstructed if, and only if, $b\in V(b_1,b_3,b_4)\subseteq\a6$.

Let $H=y_2^3 + y_3^2y_4$. Then $B_H=\a6$ and $A^{H,b}$ is obstructed if, and only if, $b\in V(b_1,b_3,b_5)\subseteq\a6$.

Let $H=y_2^2y_3 + y_3^2y_4$. Then $B_H=\a6$ and $A^{H,b}$ is obstructed if, and only if, $b\in V(b_0-b_4,b_3,b_5)\subseteq\a6$.

Let $ H = y_3^2y_4-y_3^2y_4$. Then $B_H=\a6\setminus V(b_0,b_1,b_3)$ and $A^{H,b}$ is obstructed if, and only if, $b\in V(-b_1^2+b_0b_2-b_1b_3-b_3^2+b_0b_4+b_0b_5)\subseteq\a6$.

Let $H=y_3y_4^2 $. Then $B_H=\a6\setminus V(b_0,b_1,b_3)$ and and $A^{H,b}$ is obstructed if, and only if, $b\in V(b_1^2-b_0b_2)\subseteq\a6$.

Let $H=y_4^3$. Then $B_H=\{\ b\in\a6\ \vert\ \rk(M_b)\ge2\ \}$ and $A^{H,b}$ is obstructed for each $b\in B_H$.

In all the aforementioned cases, if $A^{H,b}$ is obstructed, then $N_{A^{H,b}}=49$.
\end{proposition}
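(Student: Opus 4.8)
The plan is to reduce the statement to a finite Gr\"obner-basis stratification of the six-dimensional parameter space, following the strategy described in detail just before the statement. Recall from the discussion in Section \ref{sObstructed} that for $A=S[4]/J$ with $H_A=(1,4,4,1,1)$ one has $N_A=\dim_k(S[4]/J^2)-11$, so that $A$ is obstructed exactly when $\dim_k(S[4]/J^2)\ge56$, and that when the Hilbert function of $S[4]/J^2$ is $(1,4,10,20,20,4,1)$ (so $\dim_k=60$) one gets precisely $N_A=49$. By \cite{Em} the family $\{A^{H,b}\}_{b\in B_H}$ is flat with fibre Hilbert function $(1,4,4,1,1)$; since $\dim_k(S[4]/I^2)$ is upper semicontinuous in the ideal $I$ (its initial ideal is generically constant and can only jump on a closed set), the obstructed locus is closed in $B_H$, and the rescaling $(y_1,y_2,y_3,y_4)\mapsto(t^3y_1,t^4y_2,t^4y_3,t^4y_4)$ used in the proof of Lemma \ref{0-smooth} sends $F^{H,b}$ to a nonzero multiple of $F^{H,t^{-1}b}$, whence $A^{H,b}\cong A^{H,t^{-1}b}$ for $t\ne0$; in particular $B_H$ and the obstructed locus are cones through the origin. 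Thus it suffices, for each of the six cubics $H$, to determine $B_H$ and then the reduced cone of obstructed parameters. The first task is elementary: $B_H$ is the locus where $x_2\circ(y_1Q_b+H)$, $x_3\circ(y_1Q_b+H)$, $x_4\circ(y_1Q_b+H)$ are linearly independent, and a direct inspection of each $H$ yields $B_H=\a6$ for $H\in\{y_2^3+y_3^3+y_4^3,\ y_2^3+y_3^2y_4,\ y_2^2y_3+y_3^2y_4\}$, $B_H=\a6\setminus V(b_0,b_1,b_3)$ for $H\in\{y_3^2y_4-y_3y_4^2,\ y_3y_4^2\}$, and $B_H=\{\,b:\rk(M_b)\ge2\,\}$ for $H=y_4^3$.

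For the second task, I would fix $H$ and work in $k[x_1,\dots,x_4]\otimes_k k[b_0,\dots,b_5]$ --- legitimate since $S[4]_+^{10}\subseteq\Ann(F^{H,b})^2$ --- with the product term order of Section \ref{sObstructed} (degrevlex on $x_4>x_3>x_2$, times lex on $x_1$, times degrevlex on $b_0>\dots>b_5$). Computing a Gr\"obner basis of $\Ann(F^{H,b})^2$ and reading off $\init(\Ann(F^{H,b})^2)$, each monomial generator in the $x_i$ carries a coefficient ideal in $k[b]$, homogeneous by the $\mathbb G_m$-equivariance above. One then grows a tree: the associated primes of these coefficient ideals are the level-$1$ conditions; adjoining each to $\Ann(F^{H,b})^2$ and recomputing the Gr\"obner basis and the initial ideal produces the level-$2$ primes, and so on until the initial ideal stabilises along every branch. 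At each leaf --- the generic point of $V(r)$ for a prime $r$ on that branch --- the Hilbert function of $S[4]/\Ann(F^{H,b})^2$ is obtained by reducing the initial ideal modulo $r$, discarding the generators lying in $k[b]$, setting $b_i=1$, and reading the Hilbert series of the resulting monomial ideal. Recording the leaves on which $\dim_k(S[4]/\Ann(F^{H,b})^2)\ge56$, intersecting with $B_H$, and replacing each stratum by its Zariski closure (permissible by the cone structure), one obtains in each case exactly the variety claimed in the statement; on it the Hilbert function turns out to be $(1,4,10,20,20,4,1)$, so $N_{A^{H,b}}=49$, while off it (inside $B_H$) it drops to total dimension $55$ and $A^{H,b}$ is unobstructed --- except for $H=y_4^3$, where the single relevant stratum is all of $B_H=\{\rk(M_b)\ge2\}$.

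I expect the only genuine obstacle to be computational robustness rather than anything conceptual. In several of the six cases the ideals $\Ann(F^{H,b})^2$ and their initial ideals are large enough that Singular's primary-decomposition package \texttt{Primdec} does not terminate, so the associated primes of the coefficient ideals must be extracted by hand --- by factoring, by elimination, and by exploiting the $\mathbb G_m$-grading to split the problem into homogeneous subproblems. One must also check carefully, at each leaf, that discarding the $k[b]$-generators and specialising $b_i=1$ really computes the Hilbert function of the \emph{generic} fibre of $S[4]/\Ann(F^{H,b})^2$ over $V(r)$: this is where the flatness of $\{A^{H,b}\}$ over $B_H$ --- for the algebras, but only upper semicontinuity for their squares --- is used. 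Finally, in the cases with a defective cubic $H$ one has to reconcile the literal locus where the three derivatives are independent with the invariant description (e.g. $\rk(M_b)\ge2$ for $H=y_4^3$), which amounts to keeping track of the residual linear changes of $y_2,y_3,y_4$ preserving the normal form of $F$.
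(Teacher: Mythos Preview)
Your proposal is correct and follows essentially the same approach as the paper: both determine $B_H$ directly from the linear independence condition on the partial derivatives, then build the same Gr\"obner-basis tree in $k[x_1,\dots,x_4,b_0,\dots,b_5]$ with the same product term order, stratifying $B_H$ by the associated primes of the coefficient ideals of $\init(\Ann(F^{H,b})^2)$ and reading off the Hilbert function of $S[4]/\Ann(F^{H,b})^2$ at the generic point of each stratum. The paper simply carries this out in full for $H=y_2^3+y_3^3+y_4^3$ (listing the explicit generators of $\Ann(F^{H,b})$ and the entire tree $r_1,\dots,r_{17}$ with all descendants) and records only the generators and final answers for the remaining five cubics.
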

\begin{proof}
In each of the above cases we indicate a non--minimal set of generators of the ideal $ \Ann(F^{H,b}) $ and the locus in $B_H$ corresponding
to obstructed ideals. Only in the first case of the list
we report also the tree and
the Hilbert functions on the general element of each subset of $B_H$ where the initial ideal changes.

Let $H=y_2^3+y_3^3+y_4^3$. It is immediate to check that $B_{H}=\a6$. Again we will denote by $\cU$ the open and non--empty subset of $b\in \a6$ such that $A^{H,b}$ is unobstructed. Since $H$ is fixed throughout the whole proof we will simply write $J_b$ instead of $\Ann(F^{H,b})$. $J_b$ is non--minimally generated by
\begin{gather*}
3x_2x_1-b_0p_1-b_1p_2-b_3p_3,\quad 3x_3x_1-b_1p_1-b_2p_2-b_4p_3, \quad 12x_3x_2-b_1x_1^3,\\
3x_4x_1-b_3p_1-b_4p_2-b_5p_3, \quad 12x_4x_2-b_3x_1^3, \quad 12x_4x_3-b_4x_1^3, \quad x_2x_1^2, \\
 12x_2^2x_1-b_0x_1^4, \quad  4x_2^3-x_1^4, \quad x_3x_1^2, \quad 12x_3x_2x_1-b_1x_1^4, \quad x_3x_2^2, \\
  12x_3^2x_1-b_2x_1^4, \quad x_3^2x_2, \quad 4x_3^3-x_1^4, \quad x_4x_1^2, \quad 12x_4x_2x_1-b_3x_1^4, \quad x_4x_2^2, \\
   12x_4x_3x_1-b_4x_1^4, \quad x_4x_3x_2, \quad x_4x_3^2, \quad 12x_4^2x_1-b_5x_1^4, \quad  x_4^2x_2, \quad x_4^2x_3, \quad 4x_4^3-x_1^4
\end{gather*}
where
$$
p_1=x_2^2-b_0x_1^3/12,\quad p_2=x_3^2-b_2x_1^3/12,\quad p_3=x_4^2-b_5x_1^3/12,
$$
and by the monomials in \eqref{highDegree}. 

We know $0\not\in\cU$, with $ N_{S[4]/J_0}=49$. We have that $\init( J^2_b) $ could change only if $b$ is in the variety $V(r_*)\subseteq\a6$ where $r_*$ is one of the following ideals:
\begin{itemize}
\item  $ r_1 = ( b_1, b_3, b_4 );$
\item $ r_2 = ( b_3, b_4, b_5 );$
\item  $ r_3 = (  b_3 );$
\item  $ r_4 = (  b_1 );$
\item  $ r_5 = (  b_1, b_3 );$
\item  $ r_6 = (  b_1, b_4 );$
\item  $ r_7 = (  b_3, b_4 );$
\item  $ r_8 = (  b_0, b_1, b_3 );$
\item  $ r_9 = (  -b_1^2+b_0b_2 );$
\item  $ r_{10} = (  -b_1b_3+b_0b_4 );$
\item  $ r_{11} = (  b_1b_2^2b_3^2-2b_1^2b_2b_3b_4+b_1^3b_4^2-b_3^3b_4^2+2b_1b_3^2b_4b_5-b_1^2b_3b_5^2 );$
\item  $ r_{12} = (  -b_2b_3^2+2b_1b_3b_4-b_0b_4^2-b_1^2b_5+b_0b_2b_5 );$
\item  $ r_{13} = (  b_1, b_2, b_4 );$
\item  $ r_{14} = (  -b_2b_3+b_1b_4, -b_1b_3+b_0b_4, -b_1^2+b_0b_2 );$
\item  $ r_{15} = (  -b_4^2+b_2b_5, -b_3b_4+b_1b_5, -b_2b_3+b_1b_4 );$
\item  $ r_{16} = (  -b_3b_4+b_1b_5, -b_3^2+b_0b_5, -b_1b_3+b_0b_4 );$
\item  $ r_{17} = (  -b_4^2+b_2b_5, -b_3b_4+b_1b_5, -b_2b_3+b_1b_4, -b_3^2+b_0b_5, -b_1b_3+b_0b_4, -b_1^2+b_0b_2 ).$
\end{itemize}

Those ideals are the first level conditions in the tree we are
going to construct. Now, we impose the conditions one at a time,
i.e. we compute a Gr\"obner basis of  $ r_* + J^2_b,$ (i.e.
$J^2_b$ restricted to $r_*$) and we analyze its initial ideal. We
start from the ones of larger codimension and smaller degree.

So, we consider first codimension $ 3 $ ideals. Let us first look at $ r_1$. If we denote by $r_{1,1},\dots,r_{1,N}$ the second order condition, we find $b\in V(r_1)\setminus\bigcup_{j=1}^N V_{r_{1,j}}$ which is in $\cU$ or, in other words, such that $A^{H,b}$ is obstructed. Since $V(r_1)$ is irreducible, it follows that $V(r_1)\cap\cU=\emptyset$ due to the semicontinuity of the Hilbert function of $S[4]/J_b^2$. We will show in what follows that actually $V(r_1)=\a6\setminus\cU$.

Consider $ r_2.$ There exists $b\in V(r_2)\cap\cU$: since $\cU$ is open this property holds for the general $b\in V(r_2)$. The conditions that force the initial ideal to change obviously contain $ r_2 $. They are:
\begin{itemize}
\item  $ r_{2,1} = (  b_1, b_3, b_4, b_5 );$
\item  $ r_{2,2} = (  b_0, b_1, b_3, b_4, b_5 );$
\item  $ r_{2,3} = (  -b_1^2+b_0b_2, b_3, b_4, b_5 );$
\item  $ r_{2,4} = (  b_1, b_2, b_3, b_4, b_5 ).$
\end{itemize}
Notice that $ r_{2,2}, r_{2,4} \supset r_{2,1} $. On the one hand, if $b\in V(r_{2,2})\cup V(r_{2,4})\cup V(r_{2,1})=V(r_{2,1})$ using Singular, we check that $N_{S[4]/J_b} = 49$, whence $b\not\in\cU$. On the other hand, if $b\in V(r_{2,3})\setminus V(r_{2,1})$, then $b\in\cU$. We notice that $r_{2,1} \supset r_1$.

Further, we consider $ r_8.$ We first notice that there exists $b\in V(r_8)\cap \cU$. There are many conditions that force the initial ideal to change, but many of them have been already examined above (e.g. $r_{2,2}$). An easy and careful check of such conditions yields that the only totally new ones are
\begin{itemize}
\item  $ r_{8,1} = (  b_0, b_1, b_3, b_4 );$
\item  $ r_{8,2} = (  b_0, b_1, b_2, b_3, b_4 );$
\item  $ r_{8,3} = (  -b_4^2+b_2b_5, b_3, b_1, b_0 ).$
\end{itemize}
Notice that $V(r_{8,1}),V( r_{8,2})\subseteq V(r_1) \subseteq\a6\setminus\cU$. If $b\in V(r_{8,3})\setminus V(r_1)$, then $b\in\cU$. We conclude that there are  no level $ 3 $ conditions in this case.

Also for $V(r_{13})$, there exists $b\in V(r_{13})\cap \cU$. The conditions that force the initial ideal to change are:
\begin{itemize}
\item  $ r_{13,1} = (  b_1, b_2, b_3, b_4 );$
\item  $ r_{13,2} = (  -b_3^2+b_0b_5, b_4, b_2, b_1 ).$
\end{itemize}
The first one defines a subvariety of $ V(r_1)$ and so we do not study further it. If $b\in V(r_{13,2})\setminus V(r_1)$, then $b\in\cU$. A similar argument holds for $V(r_{17})\setminus V(r_1)$.

Now, we start studying codimension $ 2 $ ideals that appear in the initial list. The first we consider is $ r_5.$ Again there is $b\in V(r_5)\cap \cU$, and the only new condition that forces the initial ideal to change is
\begin{itemize}
\item  $ r_{5,1} = (  -b_4^2+b_2b_5, b_3, b_1 ).$
\end{itemize}
$b\in\cU$ if, and only if, $b\not\in V(r_1)$. Thus there is no level $ 3 $ condition to analyze. A similar argument holds for $ r_6 $ and $ r_7$. In this case the only new conditions are:
\begin{itemize}
\item  $ r_{6,1} = (  -b_3^2+b_0b_5, b_4, b_1 );$
\item  $ r_{7,1} = (  -b_1^2+b_0b_2, b_4, b_3 ).$
\end{itemize}
$b\in\cU$ if, and only if, either $b\in V(r_{6,1})\setminus V(r_1)$ or $b\in V(r_{7,1})\setminus V(r_1)$.

Next, we examine $V(r_{14})$. Again there is $b\in V(r_{14})\cap\cU$. The conditions that force $\init(r_{14} + J^2_b)$ to change are:
\begin{itemize}
\item  $ r_{14,1} = (  b_2, b_1, b_0 );$
\item  $ r_{14,2} = (  b_3, b_2, b_1, b_0 ).$
\end{itemize}
Notice that $V( r_{14,2})\subseteq V(r_8)$. We already studied $V(r_8)$ above,
and so we forget it because it does not give new insight. Again there is
$b\in V(r_{14,1})\cap\cU$, but a new condition appears that forces
$\init(r_{14,1} + J^2_b) $ to change, i.e. a level $ 3 $ condition. It is:
\begin{itemize}
\item  $ r_{14,1,1} = (  b_4, b_2, b_1, b_0 ).$
\end{itemize}
Trivially $r_{14,1,1}\subseteq V(r_{13}) $ that we studied above.

When studying $ r_{15} $ no new condition shows up, and $V(r_{15})\cap\cU\ne\emptyset$.

The last codimension $ 2 $ condition is $ r_{16}.$ Again $V(r_{16})\cap\cU\ne\emptyset$, and the special condition is:
\begin{itemize}
\item  $ r_{16,1} = (  b_5, b_3, b_0 ).$
\end{itemize}
Studying $ r_{16,1},$ the following two new conditions appear
\begin{itemize}
\item  $ r_{16,1,1} = (  b_5, b_3, b_1, b_0 );$
\item  $ r_{16,1,2} = (  b_5, b_4, b_3, b_0 ).$
\end{itemize}
The first one defines a subvariety of $ V(r_8) $ while the second one defines a subvariety of $ V( r_2) $ and so no one of them has to be studied. Moreover the points outside $V(r_{16,1,1})\cup V(r_{16,1,2})$ are in $\cU$.

Now, we consider codimension $ 1 $ ideals.

The first one we consider is $ r_3.$ For general $b\in V(r_3)$, then $b\in\cU$, and the new conditions forcing the initial ideal to change are:
\begin{itemize}
\item  $ r_{3,1} = (  -b_1^2+b_0b_2, b_3 );$
\item  $ r_{3,2} = (  b_0, b_3 );$
\item  $ r_{3,3} = (  -b_0b_4^2-b_1^2b_5+b_0b_2b_5, b_3 ).$
\end{itemize}
There exists $b\in V(r_{3,2})\cap\cU$, and the only new condition is:
\begin{itemize}
\item  $ r_{3,2,1} = (  b_4, b_3, b_0 ).$
\end{itemize}
Such a case has been already examined above because $ V(r_{3,2,1})\subseteq V(r_7)$.

When we take a general $b\in V(r_{3,1}),$ then again $b\in\cU$. The only new condition to examine is:
\begin{itemize}
\item  $ r_{3,1,1} = (  b_3, b_2, b_1 ). $
\end{itemize}
We can argue as above because $ V(r_{3,1,1})\subseteq V(r_5)$.

Finally, $b\in\cU$ is unobstructed for general $b\in V(r_{3,3})$ and no new condition shows up. Hence the analysis of $ r_3 $ is over.

We find $b\in V(r_4)\cap\cU$, thus the general $b\in V(r_4)$ is in $\cU$. The conditions that force the $\init(r_4 + J^2) $ to change are:
\begin{itemize}
\item  $ r_{4,1} = (  b_2, b_1 );$
\item  $ r_{4,2} = (  b_1, b_0 );$
\item  $ r_{4,3} = (  -b_2b_3^2-b_0b_4^2+b_0b_2b_5, b_1 ).$
\end{itemize}
If $b\in V(r_{4,1})\cup V(r_{4,3})$ is general, then $b\in\cU$. Since no new condition appears it follows that the same holds for each $b$. Similarly $b\in V(r_{4,2})\cap\cU$, but the following two new conditions show up:
\begin{itemize}
\item  $ r_{4,2,1} = (  b_4, b_1, b_0 );$
\item  $ r_{4,2,2} = (  b_2^3-b_4^3+b_2b_4b_5, b_3, b_1, b_0 ).$
\end{itemize}
The first ideal defines a subvariety of $ V(r_6) $ while the second one defines a subvariety of $ V(r_8) $ and so no one of them has to be studied.

The analysis of $ r_9 $ is more difficult than the previous ones. We still have $V(r_9)\cap\cU\ne\emptyset$, and the only new condition that appears is:
\begin{itemize}
\item  $ r_{9,1} = (  b_1b_2^2b_3^2-2b_1^2b_2b_3b_4+b_1^3b_4^2-b_3^3b_4^2+2b_1b_3^2b_4b_5-b_1^2b_3b_5^2, -b_1^2+b_0b_2 ).$
\end{itemize}
Again, we can find $b\in V(r_{9,1})\cap\cU$, but a new condition appears:
\begin{itemize}
\item  $ r_{9,1,1} = (  b_2b_3+b_1b_4, b_1b_3+b_0b_4, b_1^2-b_0b_2, 4b_2^3b_4+b_4^4+2b_2b_4^2b_5+b_2^2b_5^2, 4b_1b_2^2b_4-b_3b_4^3+2b_1b_4^2b_5+b_1b_2b_5^2, 4b_0b_2^2b_4+b_3^2b_4^2+2b_0b_4^2b_5+b_0b_2b_5^2, 4b_0b_1b_2b_4-b_3^3b_4-2b_0b_3b_4b_5+b_0b_1b_5^2, b_3^4+4b_0^2b_2b_4+2b_0b_3^2b_5+b_0^2b_5^2 ).$
\end{itemize}
Though $V(r_{9,1,1})\cap\cU\ne\emptyset$. Several non--generic new conditions show up:
\begin{itemize}
\item  $ r_{9,1,1,1} = (  4b_2^3b_4+b_4^4+2b_2b_4^2b_5+b_2^2b_5^2,
b_3, b_1, b_0 );$
\item  $ r_{9,1,1,2} = (  b_3+b_4, b_1-b_2, b_0-b_2,
4b_2^3b_4+b_4^4+2b_2b_4^2b_5+b_2^2b_5^2 );$
\item  $ r_{9,1,1,3} = (  b_4, b_2, b_1, b_3^2+b_0b_5 );$
\item  $ r_{9,1,1,4} = (  b_0+b_1+b_2, b_3^2-b_3b_4+b_4^2, b_2b_3+b_1b_4, b_1b_3-b_1b_4-b_2b_4, b_1^2+b_1b_2+b_2^2, 4b_2^3b_4+b_4^4+2b_2b_4^2b_5+b_2^2b_5^2, 4b_1b_2^2b_4-b_3b_4^3+2b_1b_4^2b_5+b_1b_2b_5^2 );$
\item  $ r_{9,1,1,5} = (  b_3, b_1, b_0, 3b_4^2-b_2b_5, 9b_2b_4+4b_5^2, 3b_2^2+4b_4b_5 );$
\item  $ r_{9,1,1,6} = (  b_4+b_5, b_3, b_2-b_5, b_1, b_0 );$
\item  $ r_{9,1,1,7} = (  b_4^2-b_4b_5+b_5^2,
b_3, b_2-b_4+b_5, b_1, b_0 ).$
\end{itemize}
The ideals $ r_{9,1,1,1}, r_{9,1,1,5}, r_{9,1,1,6} $ and $ r_{9,1,1,7} $ define subvarieties of $ V(r_8),$ while $ r_{9,1,1,3} $ defines a subvariety of $ V(r_{13}) $ and so we do not study them.

We have $V(r_{9,1,1,2})\cap\cU\not=\emptyset$, and the new non--general conditions are:
\begin{itemize}
\item  $ r_{9,1,1,2,1} = (  b_5, b_4, b_3, b_1-b_2, b_0-b_2 );$
\item  $ r_{9,1,1,2,2} = (  b_4^2-b_4b_5+b_5^2,
b_2-b_4+b_5, b_3+b_4, b_1-b_2, b_0-b_2 );$
\item  $ r_{9,1,1,2,3} = (  b_4+b_5, b_2+b_4, b_3+b_4, b_1-b_2, b_0-b_2 ).$
\end{itemize}
The first ideal defines a subvariety of $ V( r_2) $ and so we do not study it. The general $b\in V(r_{9,1,1,2,2} )\cup V(r_{9,1,1,2,3})$ is in $\cU$, and in both cases, the non--general condition is $ b_0 = \dots = b_5 = 0,$ that defines a point in $ V(r_1).$

We have $V(r_{9,1,1,4})\cap\cU\ne\emptyset$, and the non--general conditions are
\begin{itemize}
\item  $ r_{9,1,1,4,1} = (  b_4^2-b_4b_5+b_5^2,
b_3-b_4+b_5, b_2-b_3, b_1-b_5, b_0+b_1+b_2 );$
\item  $ r_{9,1,1,4,2} = (  b_5, b_4, b_3, b_1^2+b_1b_2+b_2^2, b_0+b_1+b_2 );$
\item  $ r_{9,1,1,4,3} = (  b_4^2-b_4b_5+b_5^2,
b_3-b_5, b_2-b_4+b_5, b_1+b_4, b_0+b_1+b_2 );$
\item  $ r_{9,1,1,4,4} = (  b_4+b_5, b_3^2+b_3b_5+b_5^2,
b_2+b_4, b_1-b_3, b_0+b_1+b_2 ).$
\end{itemize}
$ V(r_{9,1,1,4,2})\subseteq V( r_2) $ and so we do
not consider it. The general  $b\in V(r_{9,1,1,4,1})\cup V( r_{9,1,1,4,3})\cup V(  r_{9,1,1,4,4})$
is in $\cU$, and the only non--general condition in all the
three cases is $ b_0 = \dots = b_5 = 0,$ that was considered
earlier. So, the analysis of $ r_9 $ is over, too.

We can find $b\in V( r_{10})\cap\cU$ and the only new non--general condition that appears
from the study of $\init( r_{10} + J^2 )$ is:
\begin{itemize}
\item  $ r_{10,1} = (  b_1b_3-b_0b_4,
b_0b_2^2b_3+b_1^3b_4-2b_0b_1b_2b_4-b_3^3b_4+2b_0b_3b_4b_5-b_0b_1b_5^2
).$
\end{itemize}
Again, $V(r_{10,1})\cap\cU\ne\emptyset$, and no new non--general condition shows up.
Hence, the analysis of $ r_{10} $ is complete.

$V(r_{12})\cap\cU\ne\emptyset$ and no new
non--general condition appears. We infer $V(r_{12})\setminus V(r_1)\subseteq\cU$.

So, we can consider the last and more difficult case $r_{11}$. As usual $V(r_{11})\cap\cU\ne\emptyset$,
and the new non--general conditions are:
\begin{itemize}
\item  $ r_{11,1} = (
b_2b_3^2-2b_1b_3b_4+b_0b_4^2+b_1^2b_5-b_0b_2b_5,
b_1^3-b_0b_1b_2-b_3^3+b_0b_3b_5 );$
\item  $
r_{11,2} = (  b_1b_3+b_0b_4,
b_0b_2^2b_3-b_1^3b_4-2b_0b_1b_2b_4+b_3^3b_4+2b_0b_3b_4b_5-b_0b_1b_5^2
).$
\end{itemize}
$ r_{11,1} $ contains $ r_{12} $ and so we can skip its study.
However, there is $b\in V(
r_{11,2})\cap\cU$. The new non--general conditions that
show up from the study of $\init(r_{11,2} + J^2_b)$
are:
\begin{itemize}
\item  $ r_{11,2,1} = (  b_4^2-b_2b_5,
b_3b_4-b_1b_5, b_3^2+b_0b_5, b_2b_3-b_1b_4, b_1b_3+b_0b_4,
b_1^2+b_0b_2 );$
\item  $ r_{11,2,2} = (
b_5, b_4, b_3, b_1^2+b_0b_2 );$
\item  $
r_{11,2,3} = (  b_4, b_2, b_1, b_3^2+b_0b_5 );$
\item  $ r_{11,2,4} = (  b_3, b_1, b_0,
4b_2^3b_4-27b_4^4+18b_2b_4^2b_5+b_2^2b_5^2+4b_4b_5^3 );$
\item  $ r_{11,2,5} = (  b_3, b_1, b_0,
4b_2^3b_4-15b_4^4+4b_2b_4^2b_5+3b_2^2b_5^2+4b_4b_5^3 );$
\item  $ r_{11,2,6} = (  9b_4^2-3b_4b_5+b_5^2, b_3,
b_2-3b_4+b_5, b_1, b_0 );$
\item  $ r_{11,2,7} =
(  3b_4+b_5, b_3, b_2+3b_4, b_1, b_0 );$
\item  $ r_{11,2,8} = (  b_5, b_3, b_2, b_1, b_0
);$
\item  $ r_{11,2,9} = (
b_4^2+b_4b_5+b_5^2, b_3, b_2+b_4+b_5, b_0, b_1 );$
\item  $ r_{11,2,10} = (  b_4-b_5, b_3, b_2-b_4,
b_0, b_1 );$
\item  $ r_{11,2,11} = (
225b_4^3+17b_5^3, b_3, -105b_4^2+17b_2b_5, 15b_2b_4+7b_5^2,
17b_2^2+49b_4b_5, b_0, b_1 );$
\item  $
r_{11,2,12} = (  b_1b_3+b_0b_4,
3b_1^2b_4+b_0b_2b_4-b_3^2b_5+b_0b_5^2,
b_2b_3^2+3b_0b_4^2+b_1^2b_5-b_0b_2b_5,
b_2^2b_3-3b_1b_2b_4+3b_3b_4b_5-b_1b_5^2,
b_1^2b_2-b_0b_2^2-3b_3^2b_4-b_0b_4b_5,
b_1^3-b_0b_1b_2-b_3^3+b_0b_3b_5,
3b_2b_3b_4^2-9b_1b_4^3+4b_1b_2b_4b_5+b_3b_4b_5^2+b_1b_5^3,
4b_2^3b_4-27b_4^4+18b_2b_4^2b_5+b_2^2b_5^2+4b_4b_5^3,
4b_1b_2^2b_4-9b_3b_4^3+b_2b_3b_4b_5+3b_1b_4^2b_5+b_1b_2b_5^2
);$
\item  $ r_{11,2,13} = (  b_1b_3+b_0b_4,
b_0b_2^2b_3-b_1^3b_4-2b_0b_1b_2b_4+b_3^3b_4+2b_0b_3b_4b_5-b_0b_1b_5^2,
4b_0b_1^3b_4+12b_0^2b_1b_2b_4+b_2b_3^2b_4^2+3b_0b_4^4-b_2^2b_3^2b_5-12b_0^2b_3b_4b_5+b_1^2b_4^2b_5-4b_0b_2b_4^2b_5+4b_0^2b_1b_5^2-b_1^2b_2b_5^2+b_0b_2^2b_5^2,
b_2^2b_3^4+16b_0^3b_1b_2b_4+12b_0^2b_3^3b_4+6b_0b_2b_3^2b_4^2+9b_0^2b_4^4-8b_0^3b_3b_4b_5+2b_3^4b_4b_5+8b_0b_1^2b_4^2b_5+4b_0^3b_1b_5^2+b_1^4b_5^2-2b_0b_1^2b_2b_5^2+b_0^2b_2^2b_5^2+4b_0b_3^2b_4b_5^2+2b_0^2b_4b_5^3,
b_2^3b_3^3+12b_0^2b_2b_3^2b_4+4b_0^3b_4^3+2b_1^3b_4^3+3b_0b_1b_2b_4^3-5b_3^3b_4^3-4b_0^2b_1^2b_4b_5+4b_0^3b_2b_4b_5+b_2b_3^3b_4b_5-11b_0b_3b_4^3b_5+2b_0b_2b_3b_4b_5^2+7b_0b_1b_4^2b_5^2+b_1^3b_5^3-b_0b_1b_2b_5^3,
b_2^4b_3^2-16b_0^2b_1b_2b_4^2+b_1^2b_2^2b_4^2+2b_0b_2^3b_4^2-12b_0b_3^3b_4^2-6b_2b_3^2b_4^3-9b_0b_4^5+4b_2^2b_3^2b_4b_5+8b_0^2b_3b_4^2b_5-6b_1^2b_4^3b_5+4b_0b_2b_4^3b_5-4b_0^2b_1b_4b_5^2+4b_1^2b_2b_4b_5^2+b_0b_2^2b_4b_5^2+b_3^2b_4^2b_5^2+2b_0b_4^2b_5^3+b_1^2b_5^4,
4b_0^3b_1^3+b_1^6-4b_0^4b_1b_2+b_0b_1^4b_2-b_0^2b_1^2b_2^2-b_0^3b_2^3-12b_0^3b_3^3+b_3^6-4b_0^2b_2b_3^2b_4-4b_0^3b_4^3-4b_0^4b_3b_5+b_0b_3^4b_5-4b_0^2b_1^2b_4b_5-4b_0^3b_2b_4b_5-b_0^2b_3^2b_5^2-b_0^3b_5^3,
b_2^3b_3^2b_4^2+12b_0^2b_2b_3b_4^3-4b_0^2b_1b_4^4+b_1^2b_2b_4^4+2b_0b_2^2b_4^4-3b_3^2b_4^5-16b_0^2b_1b_2b_4^2b_5-10b_0b_4^5b_5+b_2^2b_3^2b_4b_5^2+12b_0^2b_3b_4^2b_5^2-6b_1^2b_4^3b_5^2+6b_0b_2b_4^3b_5^2-4b_0^2b_1b_4b_5^3+4b_1^2b_2b_4b_5^3+b_3^2b_4^2b_5^3+2b_0b_4^2b_5^4+b_1^2b_5^5,
4b_0b_1^2b_2^2b_4^2-4b_0^2b_2^3b_4^2-4b_0b_2b_3^2b_4^3-b_2^2b_3b_4^4+3b_1b_2b_4^5+2b_2^3b_3b_4^2b_5+4b_0b_1^2b_4^3b_5+16b_0^2b_2b_4^3b_5-6b_1b_2^2b_4^3b_5-3b_3b_4^5b_5-b_2^4b_3b_5^2+3b_1b_2^3b_4b_5^2+12b_0b_3^2b_4^2b_5^2+6b_2b_3b_4^3b_5^2+b_1b_4^4b_5^2-3b_2^2b_3b_4b_5^3+4b_0^2b_4^2b_5^3-2b_1b_2b_4^2b_5^3+b_1b_2^2b_5^4,
16b_0^2b_1b_2^2b_4^2+b_2^2b_3^2b_4^3-4b_0^2b_3b_4^4-3b_1^2b_4^5+2b_0b_2b_4^5-b_2^3b_3^2b_4b_5-12b_0^2b_2b_3b_4^2b_5-4b_0^2b_1b_4^3b_5+7b_1^2b_2b_4^3b_5-2b_0b_2^2b_4^3b_5+b_3^2b_4^4b_5+4b_0^2b_1b_2b_4b_5^2-4b_1^2b_2^2b_4b_5^2-b_2b_3^2b_4^2b_5^2+2b_0b_4^4b_5^2+b_1^2b_4^2b_5^3-2b_0b_2b_4^2b_5^3-b_1^2b_2b_5^4,
48b_0^4b_2b_3b_4-4b_0b_2b_3^4b_4-16b_0^4b_1b_4^2+48b_0^3b_2^2b_4^2-3b_2^2b_3^3b_4^2-6b_0b_2b_3b_4^4-9b_0b_1b_4^5+48b_0^3b_3^3b_5-4b_3^6b_5+8b_0^2b_2b_3^2b_4b_5-32b_0^3b_4^3b_5+24b_0b_1b_2b_4^3b_5+6b_3^3b_4^3b_5+16b_0^4b_3b_5^2-4b_0b_3^4b_5^2+16b_0^2b_1^2b_4b_5^2+12b_0^3b_2b_4b_5^2-12b_0b_1b_2^2b_4b_5^2+15b_0b_3b_4^3b_5^2+4b_0^2b_3^2b_5^3-12b_0b_1b_4^2b_5^3+4b_0^3b_5^4-3b_1^3b_5^4,
12b_0^3b_2b_3^3-b_2b_3^6+4b_0^4b_3b_4^2-3b_0b_3^4b_4^2+4b_0^4b_2b_3b_5-b_0b_2b_3^4b_5+4b_0^4b_1b_4b_5-7b_0^2b_3^2b_4^2b_5+b_0^2b_2b_3^2b_5^2-5b_0^3b_4^2b_5^2-b_0^2b_1^2b_5^3+b_0^3b_2b_5^3,
8b_1^4b_4^4+28b_0b_1^2b_2b_4^4-4b_0^2b_2^2b_4^4-4b_0b_3^2b_4^5-3b_2b_3b_4^6+9b_1b_4^7+16b_0^2b_2^3b_4^2b_5+16b_0b_2b_3^2b_4^3b_5+6b_2^2b_3b_4^4b_5+32b_0^2b_4^5b_5-22b_1b_2b_4^5b_5-3b_2^3b_3b_4^2b_5^2+4b_0b_1^2b_4^3b_5^2-4b_0^2b_2b_4^3b_5^2+17b_1b_2^2b_4^3b_5^2-b_3b_4^5b_5^2+4b_0^2b_2^2b_4b_5^3-4b_1b_2^3b_4b_5^3+4b_0b_3^2b_4^2b_5^3+2b_2b_3b_4^3b_5^3-b_1b_4^4b_5^3-b_2^2b_3b_4b_5^4-4b_0^2b_4^2b_5^4+2b_1b_2b_4^2b_5^4-b_1b_2^2b_5^5,
16b_0^2b_2^4b_4^2-16b_3^4b_4^4-28b_0b_1^2b_4^5-64b_0^2b_2b_4^5-4b_1b_2^2b_4^5+9b_3b_4^7-4b_0b_1^2b_2b_4^3b_5-4b_0^2b_2^2b_4^3b_5+8b_1b_2^3b_4^3b_5-68b_0b_3^2b_4^4b_5-19b_2b_3b_4^5b_5-3b_1b_4^6b_5+4b_0^2b_2^3b_4b_5^2-4b_1b_2^4b_4b_5^2+4b_0b_2b_3^2b_4^2b_5^2+11b_2^2b_3b_4^3b_5^2-28b_0^2b_4^4b_5^2+5b_1b_2b_4^4b_5^2-b_2^3b_3b_4b_5^3-4b_0^2b_2b_4^2b_5^3-b_1b_2^2b_4^2b_5^3-b_1b_2^3b_5^4,
768b_0^4b_1b_2b_4^2+128b_0^3b_2^3b_4^2+16b_1^3b_2^3b_4^2+32b_0b_1b_2^4b_4^2+576b_0^3b_3^3b_4^2-48b_3^6b_4^2+320b_0^2b_2b_3^2b_4^3+416b_0^3b_4^5-68b_1^3b_4^5-192b_0b_1b_2b_4^5+4b_2^3b_4^5+8b_3^3b_4^5-27b_4^8-384b_0^4b_3b_4^2b_5-1088b_0^3b_2b_4^3b_5+16b_1^3b_2b_4^3b_5+104b_0b_1b_2^2b_4^3b_5-8b_2^4b_4^3b_5+112b_2b_3^3b_4^3b_5+392b_0b_3b_4^5b_5+72b_2b_4^6b_5+192b_0^4b_1b_4b_5^2-160b_0^3b_2^2b_4b_5^2+4b_1^3b_2^2b_4b_5^2-8b_0b_1b_2^3b_4b_5^2+4b_2^5b_4b_5^2-88b_2^2b_3^3b_4b_5^2-1120b_0^2b_3^2b_4^2b_5^2-408b_0b_2b_3b_4^3b_5^2+20b_0b_1b_4^4b_5^2-62b_2^2b_4^4b_5^2-192b_0^3b_4^2b_5^3+176b_1^3b_4^2b_5^3+
160b_0b_1b_2b_4^2b_5^3+16b_2^3b_4^2b_5^3-128b_3^3b_4^2b_5^3+4b_4^5b_5^3-48b_0^2b_1^2b_5^4-48b_1^3b_2b_5^4+
44b_0b_1b_2^2b_5^4+b_2^4b_5^4-272b_0b_3b_4^2b_5^4-8b_2b_4^3b_5^4+128b_0b_1b_4b_5^5+4b_2^2b_4b_5^5
).$
\end{itemize}
We have $V(r_{11,2,j})\subseteq V(r_8) $, $
j=4, \dots, 11$, $V( r_{11,2,1})\subseteq V(r_{15})$, $V(r_{11,2,2})\subseteq V( r_2),$ $
V(r_{11,2,3})\subseteq V(r_{13}) $ and finally $
V(r_{11,2,12})\subseteq V(r_{12}).$ Hence, the
only ideal to be studied is $ r_{11,2,13}.$

Again there is $b\in V( r_{11,2,13}) \cap\cU$, but many new non--general conditions appear. They are:
\begin{itemize}
\item  $ r_{11,2,13,1} = (  3b_4+b_5, b_3, b_2-b_5,
b_1, b_0 );$
\item  $ r_{11,2,13,2} = (
4b_0^3b_1+b_1^4+2b_0b_1^2b_2+b_0^2b_2^2, b_5, b_4, b_3 );$
\item  $ r_{11,2,13,3} = (
12b_0^3b_3^3-b_3^6+4b_0^4b_3b_5-b_0b_3^4b_5+b_0^2b_3^2b_5^2+b_0^3b_5^3,
b_4, b_2, b_1 );$
\item  $ r_{11,2,13,4} = (
b_4-b_5, b_2-b_5, b_1-b_3, b_3^2+b_0b_5 );$
\item
$ r_{11,2,13,5} = (  b_2+b_4+b_5, b_4^2+b_4b_5+b_5^2,
b_3b_4-b_1b_5, b_1b_4+b_1b_5+b_3b_5, b_3^2+b_0b_5, b_1b_3+b_0b_4,
b_1^2-b_0b_4-b_0b_5 );$
\item  $ r_{11,2,13,6} =
(  b_4^2-b_4b_5+b_5^2, b_3, b_2-b_4+b_5, b_1, b_0 ),$
\item  $ r_{11,2,13,7} = (  b_4+b_5, b_3, b_2+b_4,
b_1, b_0 );$
\item  $ r_{11,2,13,8} = (
b_3+b_4, b_1-b_2, b_0-b_2, 4b_2^3b_4+b_4^4+2b_2b_4^2b_5+b_2^2b_5^2
);$
\item  $ r_{11,2,13,9} = (  b_0+b_1+b_2,
b_3^2-b_3b_4+b_4^2, b_2b_3+b_1b_4, b_1b_3-b_1b_4-b_2b_4,
b_1^2+b_1b_2+b_2^2, 4b_2^3b_4+b_4^4+2b_2b_4^2b_5+b_2^2b_5^2,
4b_1b_2^2b_4-b_3b_4^3+2b_1b_4^2b_5+b_1b_2b_5^2 );$
\item  $ r_{11,2,13,10} = (  b_5, b_4, b_3,
b_1+b_2, b_0-b_2 );$
\item  $ r_{11,2,13,11} =
(  b_5, b_4, b_3, b_1^2-b_1b_2+b_2^2, b_0-b_1+b_2 ),$
\item  $ r_{11,2,13,12} = (
4b_3^3+3b_4^3-2b_4^2b_5-b_4b_5^2, b_2-b_5, b_1-b_3, b_3^2+b_0b_4,
4b_0b_3-3b_4^2+2b_4b_5+b_5^2 );$
\item  $
r_{11,2,13,13} = (  b_2b_3+b_1b_5+b_3b_5, b_1b_3+b_0b_4,
b_2^2+b_2b_5+b_5^2, b_1b_2-b_3b_5, b_1^2+b_3^2-b_0b_4,
4b_0b_1+4b_0b_3+2b_2b_4+3b_4^2-b_2b_5+2b_4b_5,
4b_0^2b_3-3b_3^2b_4+3b_0b_4^2-2b_3^2b_5+b_0b_5^2 ).$
\end{itemize}
The only ideals to be studied are $ r_{11,2,13,12} $ and $
r_{11,2,13,13} $ because $V(r_{11,2,13,j})$ is contained in either $V( r_2)$ (in the cases $j=2,10,11$), or $V(r_8)$ (in the cases $j=1,6,7$), or $ V(r_{12}) $ (in the cases $j=4,5$), or $ V(r_9) $ (in the cases $j=8,9$).

For general $b\in V(r_{11,2,13,12} )\cup V(r_{11,2,13,13})$, then $b\in\cU$. The new non--general
conditions inside $r_{11,2,13,12}$ are:
\begin{itemize}
\item  $ r_{11,2,13,12,1} = (  b_4+b_5, b_3-b_5,
b_0-b_5, b_2-b_5, b_1-b_3 );$
\item  $
r_{11,2,13,12,2} = (  b_4+b_5, b_3^2+b_3b_5+b_5^2,
b_0+b_3+b_5, b_2-b_5, b_1-b_3 ),$
\end{itemize}
but both ideals define subvarieties of $ V(r_9) $ and so we do not
study them.

The new non--general conditions
 inside $r_{11,2,13,13}$ are:
\begin{itemize}
\item  $ r_{11,2,13,13,1} = (  b_4^2-b_4b_5+b_5^2,
b_3+b_4, b_2-b_4+b_5, b_1-b_4+b_5, b_0-b_4+b_5 );$
\item  $ r_{11,2,13,13,2} = (  b_4^2-b_4b_5+b_5^2,
b_3-b_4+b_5, b_2-b_4+b_5, b_1-b_5, b_0+b_4 );$
\item  $ r_{11,2,13,13,3} = (  b_4^2-b_4b_5+b_5^2,
b_3-b_5, b_2-b_4+b_5, b_1+b_4, b_0-b_5 ) $
\end{itemize}
but they all define subvarieties of $ V(r_9) $ and so they are not
to be studied, and the analysis of $ r_{11} $ is over.

Our analysis in now complete when $H=y_2^3+y_3^3+y_4^3$.

Let  $ H = y_2^3 + y_3^2y_4 $. $\Ann(F^{H,b}) $ is generated by
\begin{gather*}
12x_4^2-b_5x_1^3,\quad 12x_4x_2-b_3x_1^3,\quad 12x_3x_2-b_1x_1^3,\quad
3x_2x_1-b_0p_1-3b_1p_2-3b_3p_3,\\ 3x_3x_1-b_1p_1-3b_2p_2-3b_4p_3,\quad
 3x_4x_1-b_3p_1-3b_4p_2-3b_5p_3,\quad
x_2x_1^2, \\ 12x_2^2x_1-b_0x_1^4,\quad 4x_2^3-x_1^4, \quad x_3x_1^2,\quad
12x_3x_2x_1-b_1x_1^4, \quad x_3x_2^2, \quad
 12x_3^2x_1-b_2x_1^4, \\ x_3^2x_2,\quad
x_3^3, \quad x_4x_1^2, \quad 12x_4x_2x_1-b_3x_1^4,\quad  x_4x_2^2,\quad
12x_4x_3x_1-b_4x_1^4, \quad x_4x_3x_2, \\ 12x_4x_3^2-x_1^4,\quad
12x_4^2x_1-b_5x_1^4, \quad x_4^2x_2, \quad x_4^2x_3, \quad x_4^3,
\end{gather*}
where
$$
p_1 = x_2^2-b_0x_1^3/12,\quad p_2 = x_4x_3-b_4x_1^3/12,\quad p_3 = x_3^2-b_2x_1^3/12.
$$
Let $ H = y_2^2y_3^2y_4$. $\Ann(F^{H,b}) $ is generated by
\begin{gather*}
12x_4^2-b_5x_1^3,\quad 12x_4x_2-b_3x_1^3,\quad
x_2x_1-b_0p_1-b_1p_2-b_3p_3,\quad x_2^2-b_0x_1^3/12-p_2,\\
x_3x_1-b_1p_1-b_2p_2-b_4p_3,\quad x_4x_1-b_3p_1-b_4p_2-b_5p_3,\quad
x_2x_1^2, \quad
 12x_2^2x_1-b_0x_1^4, \\
 x_2^3, \quad x_3x_1^2, \quad
12x_3x_2x_1-b_1x_1^4, \quad 12x_3x_2^2-x_1^4, \quad
 12x_3^2x_1-b_2x_1^4, \quad
x_3^2x_2, \quad x_3^3, \\ x_4x_1^2, \quad 12x_4x_2x_1-b_3x_1^4, \quad x_4x_2^2, \quad
12x_4x_3x_1-b_4x_1^4, \quad x_4x_3x_2,\quad 12x_4x_3^2-x_1^4, \\
12x_4^2x_1-b_5x_1^4, \quad x_4^2x_2, \quad x_4^2x_3, \quad x_4^3,
\end{gather*}
where
$$
p_1 = x_3x_2-b_1x_1^3/12, \quad p_2 = x_4x_3-b_4x_1^3/12, \quad p_3 = x_3^2-b_2x_1^3/12.
$$
Let $ H = y_3^2y_4-y_3y_4^2$.
$ b_0$, $b_1$, $b_3 $ cannot vanish simultaneously.  $\Ann(F^{H,b}) $ is generated by
\begin{gather*}
12x_2^2-b_0x_1^3,\quad 12x_3x_2-b_1x_1^3,\quad 12x_4x_2-b_3x_1^3,\\
12x_3^2+12x_4x_3+12x_4^2-(b_2+b_4+b_5)x_1^3,\quad
b_1p_1-b_0p_2,\quad
b_3p_1-b_0p_3,\quad b_3p_2-b_1p_3,\\
x_4^3,\quad  12x_4^2x_3+x_1^4, x_4^2x_2,\quad
12x_4^2x_1-b_5x_1^4,\quad
12x_4x_3^2-x_1^4,\quad  x_4x_3x_2,\\
12x_4x_3x_1-b_4x_1^4,\quad  x_4x_2^2,\quad  12x_4x_2x_1-b_3x_1^4,\quad  x_3^3,\quad
x_3^2x_2,\quad  12x_3^2x_1-b_2x_1^4,\\
x_3x_2^2,\quad  12x_3x_2x_1-b_1x_1^4,\quad
x_2^3,\quad  12x_2^2x_1-b_0x_1^4,\quad  x_4x_1^2,\quad  x_3x_1^2,\quad  x_2x_1^2,
\end{gather*}
where
\begin{gather*}
p_1 = x_2x_1 +b_1(x_4^2-b_5x_1^3/12)-b_3(x_3^2-b_2x_1^3/12),\\
p_2 = x_3x_1 +b_2(x_4^2-b_5x_1^3/12)-b_4(x_3^2-b_2x_1^3/12),\\
p_3 = x_4x_1 +b_4(x_4^2-b_5x_1^3/12)-b_5(x_3^2-b_2x_1^3/12).
\end{gather*}
Let $ H = y_3y_4^2 $.
$ b_0$, $b_1$, $b_3 $ cannot vanish simultaneously and $\Ann(F^{H,b}) $ is generated by
\begin{gather*}
12x_2^2-b_0x_1^3,\quad 12x_3x_2-b_1x_1^3,\quad 12x_4x_2-b_3x_1^3,\quad
12x_3^2-b_2x_1^3,\quad
b_1p_1-b_0p_2,\\  b_3p_1-b_0p_3,\quad  b_3p_2-b_1p_3,\quad x_4^3,\quad
12x_4^2x_3-x_1^4,\quad x_4^2x_2,\quad x_4x_3^2,\quad x_4x_3x_2,\quad x_4x_2^2,\\
x_3^3,\quad x_3^2x_2,\quad x_3x_2^2,\quad x_2^3,\quad
12x_4^2x_1-b_5x_1^4,\quad
12x_4x_3x_1-b_4x_1^4,\quad 12x_4x_2x_1-b_3x_1^4,\\ 12x_3^2x_1-b_2x_1^4,\quad
12x_3x_2x_1-b_1x_1^4,\quad
12x_2^2x_1-b_0x_1^4,\quad x_4x_1^2,\quad x_3x_1^2,\quad
x_2x_1^2,
\end{gather*}
where
\begin{gather*}
p_1 = x_2x_1 - b_1(x_4^2 - b_5 x_1^3 / 12) - b_3 (x_4x_3 -
b_4 x_1^3/12),\\
p_2 = x_3x_1 - b_2(x_4^2 - b_5 x_1^3 / 12) -
b_4(x_4x_3 - b_4 x_1^3/12), \\
 p_3 = x_4x_1 - b_4(x_4^2 - b_5
x_1^3 / 12) - b_5(x_4x_3 - b_4 x_1^3/12).
\end{gather*}
Finally let $H=y_4^3$.
If the first two lines of $M_b$ would be proportional,
a suitable linear transformation on $y_2$ and $y_3$ would yield $b_0=b_1=b_3=0$, whence $x_2\circ (y_1Q_b+H)=0$, thus $b\not\in B_H$.
 $\Ann(F^{H,b}) $ is generated by
 \begin{gather*}
12x_2^2-b_0x_1^3,\quad 12x_2x_3-b_1x_1^3,\quad 12x_3^2-b_2x_1^3,\quad
12x_4x_2-b_3x_1^3,\quad 12x_4x_3-b_4x_1^3,\\
b_6(x_4^2-
b_5x_1^3/12)-3x_4x_1(b_1^2-b_0b_2)+3x_3x_1(b_1b_3-b_0b_4)-3x_2x_1(b_2b_3-b_1b_4),\\
4x_4^3-x_1^4,\quad x_4^2x_3,\quad x_4^2x_2,\quad 12x_4^2x_1-b_5x_1^4,\quad x_4x_3^2,\quad
x_4x_3x_2,\quad 12x_4x_3x_1-b_4x_1^4,\\
 x_4x_2^2,\quad
12x_4x_2x_1-b_3x_1^4,\quad x_4x_1^2,\quad x_3^3,\quad x_3^2x_2,\quad
12x_3^2x_1-b_2x_1^4,\quad x_3x_2^2,\\
12x_3x_2x_1-b_1x_1^4, x_3x_1^2,\quad
x_2^3,\quad
12x_2^2x_1-b_0x_1^4, x_2x_1^2,
\end{gather*}
where $ b_6 = -\det(M_b).$
\end{proof}


\bigskip
\noindent
Gianfranco Casnati,\\
Dipartimento di Scienze Matematiche,  Politecnico di Torino, \\
corso Duca degli Abruzzi 24, 10129 Torino, Italy \\
e-mail: {\tt gianfranco.casnati@polito.it}

\bigskip
\noindent
Roberto Notari, \\
Dipartimento di Matematica \lq\lq Francesco Brioschi\rq\rq, Politecnico di Milano,\\
via Bonardi 9, 20133 Milano, Italy\\
e-mail: {\tt roberto.notari@polimi.it}

\enddocument
\begin{thebibliography}{44}

\bibitem{C--E--V--V}
D.A. Cartwright, D. Erman, M. Velasco, B. Viray: {\em Hilbert schemes of 8 point in ${\mathbb A}^d$}. Algebra Number Theory  \textbf{3} \rm (2009), 763--795.

\bibitem{Cs}
G. Casnati: {\em Isomorphism types of Artinian Gorenstein local algebras of multiplicity at most $9$}. Comm. Algebra \textbf{38} \rm (2010), 2738--2761.

\bibitem{C--N1}
G. Casnati, R. Notari: {\em On the Gorenstein locus of some punctual Hilbert schemes}. J. Pure Appl. Algebra  \textbf{213} \rm (2009), 2055--2074

\bibitem{C--N2}
G. Casnati, R. Notari: {\em On the irreducibility and the singularities of the Gorenstein locus of the
punctual Hilbert scheme of degree $10$}. J. Pure Appl. Algebra  \textbf{215} \rm (2011), 1243--1254

\bibitem{C--N3}
G. Casnati, R. Notari: {\em Irreducibility of the Gorenstein locus of the punctual Hilbert scheme of degree $11$}. J. Pure Appl. Algebra (2014), DOI 10.1016/j.jpaa.2014.01.004.

\bibitem{sing}
W. Decker, G.M. Greuel, G. Pfister, H. Sch{\"o}nemann:
\newblock {\sc Singular} {3-1-3} --- {A} computer algebra system for polynomial computations.
\newblock {\tt http://www.singular.uni-kl.de} \rm (2011).

\bibitem{E--R1}
J. Elias, M.E. Rossi: {\em Isomorphism classes of short Gorenstein local rings via Macaulay's inverse system}. Trans. Amer. Math. Soc. \textbf{364} \rm (2012), 4589--4604.

\bibitem{E--R2}
J. Elias, M.E. Rossi: {\em Analytic isomorphisms of compressed local algebras}. arXiv:1207.6919v1 [math.AC] 30 July 2012.

\bibitem{E--V1}
J. Elias, G. Valla: {\em Structure theorems for certain Gorenstein ideals. Special volume in honor of Melvin Hochster}. Michigan Math. J. \textbf{57} \rm (2008), 269--292.

\bibitem{E--V2}
J. Elias, G. Valla: {\em Isomorphism classes of certain Artinian Gorenstein algebras}. Algebr. Represent. Theory  \textbf{14} \rm{ (2011)}, 429--448.

\bibitem{Em}
J. Emsalem: {\em G\'eom\'etrie des points \'epais}. Bull. Soc. Math. France \textbf{106} \rm (1978), 399--416.


\bibitem{Ha}
R. Hartshorne: {\em Algebraic geometry}, G.T.M. 52, Springer \rm (1977).

\bibitem{Ia1}
A. Iarrobino: {\em Compressed algebras: Artin algebras having given socle degrees and maximal lenght}, Trans. Amer. Math. Soc. \text{285} \rm (1994), 337--378.

\bibitem{Ia2}
A. Iarrobino: {\em Associated graded algebra of a Gorenstein Artin algebra}, Mem. A.M.S. \textbf{107} \rm (1994), n.514, viii+115.

\bibitem{Jel}
J. Jelisiejew: {\em Deformations of zero-dimensional schemes and applications}, arXiv:1307.8108 [math.AC] 30 July 2013.

\bibitem{Ma1}
G. Mazzola: {\em The algebraic and geometric classification of associative algebras of dimension five},
Manuscripta Math. \textbf 27 \rm (1979), 81--101.

\bibitem{Ma2}
G. Mazzola: {\em Generic finite schemes and Hochschild cocycles}, Comment. Math. Helv.
\textbf 55 \rm (1980), 267--293.

\bibitem{Po1}
B. Poonen: {\em The moduli space of commutative algebras of finite rank}, J. Eur. Math. Soc.
\textbf 10 \rm (2008), 817--836.

\bibitem{Po2}
B. Poonen: {\em Isomorphism types of commutative algebras of finite rank over an algebraically closed field}, in  {\sl Computational arithmetic geometry}, Kristin E. Lauter and Kenneth A. Ribet eds, Contemp. Math.
\textbf 463 \rm 111--120, A.M.S., 2008.

\bibitem{Sa1}
J.D. Sally: {\em Stretched Gorenstein rings}. J. London Math. Soc.  \textbf{20} \rm (1979), 19--26.



\end{thebibliography}
